\begin{document}
\title{\sc Multiplicity results for fractional Schr\"odinger-Kirchhoff systems involving critical nonlinearities}
\date{}
\maketitle
\numberwithin{equation}{section}
\allowdisplaybreaks
\newtheorem{theorem}{Theorem}[section]
\newtheorem{proposition}{Proposition}[section]
 \newtheorem{gindextheorem}{General Index Theorem}[section]
\newtheorem{indextheorem} {Index Theorem}[section]
\newtheorem{standardbasis}{Standard Basis}[section]
\newtheorem{generators} {Generators}[section]
\newtheorem{lemma} {Lemma}[section]
\newtheorem{corollary}{Corollary}[section]
\newtheorem{example}{Example}[section]
\newtheorem{examples}{Examples}[section]
\newtheorem{exercise}{Exercise}[section]
\newtheorem{remark}{Remark}[section]
\newtheorem{remarks}{Remarks}[section]
\newtheorem{definition} {{Definition}}[section]
\newtheorem{definitions}{Definitions}[section]
\newtheorem{notation}{Notation}[section]
\newtheorem{notations}{Notations}[section]
\newtheorem{defnot}{Definitions and Notations}[section]
\def\H{{\mathbb H}}
\def\N{{\mathbb N}}
\def\R{{\mathbb R}}
\newcommand{\be} {\begin{equation}}
\newcommand{\ee} {\end{equation}}
\newcommand{\bea} {\begin{eqnarray}}
\newcommand{\eea} {\end{eqnarray}}
\newcommand{\Bea} {\begin{eqnarray*}}
\newcommand{\Eea} {\end{eqnarray*}}
\newcommand{\p} {\partial}
\newcommand{\ov} {\over}
\newcommand{\al} {\alpha}
\newcommand{\ba} {\beta}
\newcommand{\de} {\delta}
\newcommand{\ga} {\gamma}
\newcommand{\Ga} {\Gamma}
\newcommand{\Om} {\Omega}
\newcommand{\om} {\omega}
\newcommand{\De} {\Delta}
\newcommand{\la} {\lambda}
\newcommand{\si} {\sigma}
\newcommand{\Si} {\Sigma}
\newcommand{\La} {\Lambda}
\newcommand{\no} {\nonumber}
\newcommand{\noi} {\noindent}
\newcommand{\lab} {\label}
\newcommand{\na} {\nabla}
\newcommand{\vp} {\varphi}
\newcommand{\var} {\varepsilon}
\newcommand{\RR}{{\mathbb R}}
\newcommand{\CC}{{\mathbb C}}
\newcommand{\NN}{{\mathbb N}}
\newcommand{\ZZ}{{\mathbb Z}}
\renewcommand{\SS}{{\mathbb S}}
\newcommand{\esssup}{\mathop{\rm {ess\,sup}}\limits}   % Operator names
\newcommand{\essinf}{\mathop{\rm {ess\,inf}}\limits}   % Operator names
\newcommand{\weaklim}{\mathop{\rm {weak-lim}}\limits}
\newcommand{\wstarlim}{\mathop{\rm {weak^\ast-lim}}\limits}
\newcommand{\RE}{\Re {\mathfrak e}}   % Real part
\newcommand{\IM}{\Im {\mathfrak m}}   % Imaginary part
\renewcommand{\colon}{:\,}
\newcommand{\eps}{\varepsilon}
\newcommand{\half}{\textstyle\frac12}
\newcommand{\Takac}{Tak\'a\v{c}}
\newcommand{\eqdef}{\stackrel{{\rm {def}}}{=}}   % Composed symbols
\newcommand{\wstarconverge}{\stackrel{*}{\rightharpoonup}}
\newcommand{\Div}{\nabla\cdot}
\newcommand{\Curl}{\nabla\times}
\newcommand{\Meas}{\mathop{\mathrm{meas}}}
\newcommand{\Int}{\mathop{\mathrm{Int}}}
\newcommand{\Clos}{\mathop{\mathrm{Clos}}}
\newcommand{\Lin}{\mathop{\mathrm{lin}}}
\newcommand{\Dist}{\mathop{\mathrm{dist}}}
\newcommand{\Square}{$\sqcap$\hskip -1.5ex $\sqcup$}
\newcommand{\Blacksquare}{\vrule height 1.7ex width 1.7ex depth 0.2ex }
\begin{center}
{\sc S. Fareh$^a$, K.  Akrout$^a$,  A.  Ghanmi$^b,$  D. D.  Repov\v{s}$^{c,d,e,}$\footnote{Corresponding author}}
\end{center}
\begin{abstract}
In this paper, we study certain critical Schr\"odinger-Kirchhoff type systems involving the fractional $p$-Laplace operator on a bounded domain. More precisely, using the properties of the associated functional energy on the  Nehari manifold sets and exploiting the analysis of the fibering map, we establish the multiplicity of solutions for such systems.\\

\noindent
{\sc  Keywords:} Variational method, Nehari manifold, elliptic equation, multiplicity of solutions.\\

\noindent
{\sc Math. Subj. Classif. (2020):} 35P30, 35J35, 35J60.
\end{abstract}

\section{Introduction}\label{sec1}
In recent years, 
a lot of
attention has been paid
 to  problems involving fractional and nonlocal operators. These types of problems arise in applications in many fields, e.g., in materials science \cite{9}, phase transitions \cite{5,38}, water
waves \cite{18, 19}, minimal surfaces \cite{15},
and
conservation laws \cite{10}.
For more applications of such problems in physical phenomena, probability, and finances, we refer interested readers to \cite{14, 16, 46}. Due to their importance, there are many interesting works on  existence and multiplicity of solutions for fractional and nonlocal problems either on bounded domains or on the entire space, see \cite{1, 3, 4, 6, 25, 26, PRR, 35, 36, 37}. 

In the last decade, many scholars have paid extensive attention to the Kirchhoff-type elliptic equations with critical exponents, see \cite{22, 27, 33}, for the  bounded domains and \cite{28, 30, 31} for the entire space. In particular, in  \cite{24}, the authors considered the following   Kirchhoff problem
\begin{equation} \label{vald}
\left\{
\begin{array}{l}
M(\underset{\mathbb{R}^{2n}}{\int\int} \frac{\left\vert u(x)-u(y)\right\vert^{2}}{ \vert x-y\vert ^{n+2s}}dxdy)(-\Delta)^{s}u=\lambda f(x,u)+ \left\vert u\right\vert
^{2^{*}_{s}-2}u\ \text{in }\Omega , \\
\\
u=0\text{ on}\ \mathbb{R}^{n}\setminus \Omega,%
\end{array}%
\right.
\end{equation}%
where $t\geq 0$  and $M(t)=a+bt$ for some  $a>0$ and $b\geq0$. Here, and in the rest of this paper,  $\Omega $ will denote a bounded domain in $\mathbb{R}^{n}$ with Lipschitz boundary $\partial\Omega$.  

Under suitable conditions, and by using the truncation technique method combined with the Mountain pass theorem,  the authors proved that for $\lambda>0$ large enough, problem \eqref{vald} has at least one nontrivial solution. Later, the fractional Kirchhoff-type problems were extensively studied by many authors using different methods, see \cite{7, 8, BRS,  17, 23, 29, 32, 11, 34, 39, 41, 42, 43, 44}.
In particular, by using the Nehari manifold method and the Symmetric mountain pass theorem,
 the authors in \cite{42} investigated the multiplicity of solutions for some $p$-Kirchhoff system with Dirichlet boundary conditions.

Mingqi et al. \cite{32}  studied the following Schr\"odinger–Kirchhoff type system
\begin{equation}  \label{Ef}
\left\{
\begin{array}{l}
M\mathbf{(}[(u,v)]^{p}_{s,p}+\left\Vert u,v\right\Vert^{p}_{p,V}\mathbf{)}\left(\mathcal{L}^{s}_{p}u+V(x)\left\vert u\right\vert^{p-2}u\right)=\lambda H_{u}(x,u,v)+\frac{\alpha}{p^{*}_{s}}\vert v\vert^{\beta}\vert u\vert^{\alpha-2}u  \ \text{in }\mathbb{R}^{n} , \\
M\mathbf{(}[(u,v)]^{p}_{s,p}+\left\Vert u,v\right\Vert^{p}_{p,V}\mathbf{)}\left(\mathcal{L}^{s}_{p}v+V(x)\left\vert v\right\vert^{p-2}v\right)=\lambda H_{v}(x,u,v)+\frac{\beta}{p^{*}_{s}}\vert u\vert^{\alpha}\vert v\vert^{\beta-2}v  \ \text{in }\mathbb{R}^{n},
\end{array}
\right.
\end{equation}
where $\lambda>0$, $\alpha+\beta=p^{*}_{s}:=\frac{np}{n-sp}$, $V:\mathbb{R}^{n}\rightarrow [0,\infty)$ is a continuous function, the Kirchhoff function $M:(0,\infty)\rightarrow(0,\infty)$ is continuous,
and
  $H_{u}$ and $H_{v}$ are  Caratheodory functions. Under some suitable assumptions and by applying the Mountain pass theorem with Ekeland’s variational principle, the authors obtained the existence and asymptotic behavior of
solutions for  system \eqref{Ef}. 

By the same methods as in \cite{32}, Fiscella et al. \cite{23} studied the existence of solutions for a critical Hardy-Schr\"odinger–Kirchhoff type system involving the fractional $p$-Laplacian in $\mathbb{R}^{n}$. Using the Three critical points theorem,
 Azroul et al.  \cite{8} established the existence of three weak solutions for a fractional $ p$-Kirchhoff-type system on a bounded domain with homogeneous Dirichlet boundary conditions.
Recently, Benkirane et al. \cite{7} have established the existence of three solutions for the $(p,q)$-Schr\"odinger-Kirchhoff type system in $\mathbb{R}^{n}$ via the Three critical points theorem.

Motivated by the above-mentioned papers,  we consider in this paper
the following Schr\"odinger-Kirchhoff type system involving the fractional $p$-Laplacian and critical nonlinearities
\begin{equation}  \label{E}
\left\{
\begin{array}{l}
M_{1}\mathbf{(}\left\Vert u\right\Vert^{p}_{V_{1}}\mathbf{)}\left((-\Delta)^{s}_{p}u+V_{1}(x)\left\vert u\right\vert^{p-2}u\right)=a_{1}(x) \left\vert u\right\vert
^{p^{*}_{s}-2}u+\lambda f(x,u,v)\ \text{in }\Omega , \\
M_{2}\mathbf{(}\left\Vert v\right\Vert^{p}_{V_{2}}\mathbf{)}\left((-\Delta)^{s}_{p}v+V_{2}(x)\left\vert v\right\vert^{p-2}v\right)=a_{2}(x) \left\vert v\right\vert
^{p^{*}_{s}-2}v+\lambda g(x,u,v)\ \text{in }\Omega , \\
u,v>0 \ \text{in }\Omega , \\
u=v=0\text{ on}\ \mathbb{R}^{n}\setminus \Omega,%
\end{array}
\right.
\end{equation}
where $\left\Vert .\right\Vert_{V_{1}}$ and  $\left\Vert .\right\Vert_{V_{2}}$  will be given later (see \eqref{norm}), $n>ps, 0<s<1<q<p$, $\lambda $ is a positive parameter, the weight functions $a_{1}$ and $a_{2}$ are positive and bounded on $\Omega,$
and
 $\mathbf{(-}\Delta \mathbf{)}_{p}^{s}$ is the fractional
$p$-Laplace operator, 
 defined by
\begin{equation*}
\left( -\Delta \right) _{p}^{s}u=2\underset{\varepsilon \rightarrow 0}{\lim }%
\int_{\mathbb{R}^{N}\backslash B_{\varepsilon }(x)}\frac{\left\vert
u(x)-u(y)\right\vert ^{p-2}(u(x)-u(y))}{\left\vert x-y\right\vert ^{n+ps}}%
dy, \
\hbox{for all}
\
x\in \mathbb{R}^{n},
\end{equation*}
where $B_{\varepsilon }(x)=\{y\in \mathbb{R}^{n}:\vert x-y\vert<\varepsilon\}.$
For more details about the fractional $p$-Laplacian operator and the basic properties of fractional Sobolev spaces, we refer the reader to \cite{20}.

 Throughout this paper, the index $i$ will denote integers $1$ or $2$, and  we
 shall
  assume that  the potential function $V_{i}:\Omega\rightarrow(0,\infty)$ is continuous and
  that 
   there exists $v_{i}>0$ such that $\underset{\Omega}{\inf}\,  V_{i}\geq v_{i}$.
 Also, we
 shall
  assume that $M_{i} :(0,\infty)\rightarrow(0,\infty),$ is a continuous function  satisfying the following conditions\newline
$(H_{1}) \underset{t\rightarrow\infty }{\lim }t^{1-\frac{p^{*}_{s}}{p}}M_{i}(t)=0$.\newline
$(H_{2})$ There exists $m_{i}>0$ such that for all $t>0$, we have 
$    M_{i}(t)\geq m_{i}.$\newline 
$(H_{3})$ There exists $\theta_{i} \in [1,\frac{p_{s}^{*}}{p}[$ such that for all $t>0$, we have
$
 M_{i}(t)t\leq \theta_{i} \widehat{M}_{i}(t),
$
where $\widehat{M}_{i}(t)=\int_{0}^{t}M_{i}(s)ds$.

Moreover, we shall assume
 that $f,g\in C(\bar\Omega\times \mathbb{R}\times\mathbb{R},[0,\infty[)$ are positively homogeneous
functions of degree $(q-1)$,
i.e., for all $t>0$ and $(x,u,v)\in \Omega \times \mathbb{R}\times \mathbb{R},$ we have
\begin{equation}  \label{homogen}
\left\{
\begin{array}{c}
f(x,tu,tv)=t^{q-1}f(x,u,v),\\
g(x,tu,tv)=t^{q-1}g(x,u,v).
\end{array}
\right.
\end{equation}%
Finally, we shall also assume that there exists a function $H:\bar{\Omega}\times\mathbb{R}\times\mathbb{R}\rightarrow \mathbb{R}$ satisfying
\begin{equation*}
    H_{u}(x,u,v)=f(x,u,v) \text{ and } H_{v}(x,u,v)=g(x,u,v),
\end{equation*}
where $H_{u}$ (respectively, $H_{v}$) denotes the partial derivative of $H$ with respect to $u$ (respectively, $v$). We note that the primitive function $H$ belongs to $C^{1}(\bar{\Omega}\times\mathbb{R}\times\mathbb{R},\mathbb{R})$ and  satisfies the following assumptions
for all $t>0, (x,u,v)\in \bar{\Omega} \times \mathbb{R}\times \mathbb{R},$
and some constant $ \gamma>0,$
\begin{equation}\label{H}
\left\{
\begin{array}{ll}
  H(x,tu,tv) = t^{q}H(x,u,v) , \\
  qH(x,u,v) = uf(x,u,v)+vg(x,u,v),\\
  \vert H(x,u,v)\vert \leq \gamma(\vert u\vert^{q}+\vert v\vert^{q}).
\end{array}
\right.
\end{equation}
Before stating our main  result, let us introduce some notations. For $s\in (0,1)$, we define the functional space
\begin{equation*}
W^{s,p}(Q)=\left\{ w:\mathbb{R}^{n}\to \mathbb{R}\ \text{measurable: }
w\in L^{p}(\Omega )\text{ and }\frac{w(x)-w(y)}{\left\vert x-y\right\vert ^{\frac{n}{p}+s}}\in L^{p}(Q)\right\} ,
\end{equation*}%
which is endowed with the norm
\begin{equation*}
\left\Vert w\right\Vert _{W^{s,p}(Q)}=\left( \left\Vert w\right\Vert _{L^{p}(\Omega
)}^{p}+\int\limits_{Q}\frac{\vert w(x)-w(y)\vert^{p}}{\left\vert x-y\right\vert ^{n+ps}}dxdy\right) ^{\frac{1}{p}},
\end{equation*}
where $Q=\mathbb{R}^{2n}\diagdown (\Omega ^{c}\times \Omega ^{c})$
 and $\Omega ^{c}=\mathbb{R}^{n}\setminus \Omega$.
From now on,  we shall  denote by $\Vert .\Vert_{q}$ the norm on the Lebesgue space ${L^{q}(\Omega)}$. It is well known that $\left(W^{s,p}(Q),\Vert .\Vert_{W^{s,p}(Q)}\right)$ is a uniformly convex Banach space.

Next, $ \ L^{p}(\Omega,V_{i})$ denotes the Lebesgue space  of real-valued functions,
with $V_{i}(x)\vert w\vert^{p}\in L^{1}(\Omega)$, endowed with the following norm
\begin{equation*}
    \Vert w\Vert_{p,V_{i}}=\left(\int\limits_{\Omega}V_{i}(x)\vert w\vert^{p}dx\right)^{\frac{1}{p}}.
\end{equation*}
Let us denote by $W^{s,p}_{V_{i}}(Q)$ the completion of $C_{0}^{\infty}(Q)$ with respect to the norm
\begin{equation}\label{norm}
    \left\Vert w\right\Vert_{V_{i}}=\left(\Vert w\Vert_{p,V_{i}}^{p}+\int\limits_{Q}\frac{\vert w(x)-w(y)\vert^{p}}{\left\vert x-y\right\vert ^{n+ps}}dxdy\right)^{\frac{1}{p}}.
\end{equation}
According to   \cite[(Theorem 6.7]{20}),  the embedding $W^{s,p}_{V_{i}}(Q)\hookrightarrow L^{\nu}(\Omega)$ is continuous for any $\nu\in [p,p^{\ast}_{s}]$. Namely, there exists a positive constant $C_{\nu}$ such that
\begin{equation*}
    \Vert w\Vert_{\nu}\leq C_{\nu}\Vert w\Vert_{V_{i}} \text{ for all } w\in W^{s,p}_{V_{i}}(Q).
\end{equation*}
Moreover, by   \cite[Lemma 2.1]{45}, the embedding  from  $W^{s,p}_{V_{i}}(Q)$ into $L^{\nu}(\Omega)$, is compact for any $\nu\in [1,p^{\ast}_{s})$.

Let $W=W^{s,p}_{V_{1}}(Q)\times W^{s,p}_{V_{2}}(Q)$ be equipped with the norm
$
    \Vert (u,v)\Vert=(\Vert u\Vert_{V_{1}}^{p}+\Vert v\Vert_{V_{2}}^{p})^{\frac{1}{p}}.
$
Then $(W,\Vert.\Vert)$ is a reflexive Banach space. The interested reader can refer to \cite{2} for more details.
Let $S_{p,V_{i}}$ be the best Sobolev constants for the embeddings from $W^{s,p}_{V_{i}}(Q)$ into $L^{p_{s}^{\ast }}(\Omega)$, which is given by
\begin{equation}  \label{sp}
S_{p,V_{i}}=\underset{u\in W^{s,p}_{V_{i}}(Q)\setminus\{0\}}{\inf }\frac{\left\Vert w\right\Vert_{V_{i}}^{p}}{\left\Vert w\right\Vert _{p_{s}^{\ast }}^{p}}.
\end{equation}
For simplicity, in the rest of this paper, $S$ will denote the following expression
\begin{equation}\label{SSS}
    S=\min(S_{p,V_{1}},S_{p,V_{2}}).
\end{equation}
Next, we define the notion of solutions for problem \eqref{E}.
\begin{definition}\label{def1.1}
We say that $(u,v)\in W$ is a weak solution of problem \eqref{E}, if
\begin{equation*}
\begin{array}{l}
M_{1}\left(\left\Vert u\right\Vert^{p}\right)\left(\int\limits_{Q}\frac{\left\vert u(x)-u(y)\right\vert^{p-2}(u(x)-u(y))(z(x)-z(y))}{\vert x-y\vert^{n+ps}}dxdy+\int\limits_{\Omega}V_{1}(x)\left\vert u\right\vert^{p-2}uzdx\right) \\
+M_{2}\left(\left\Vert v\right\Vert^{p}\right)\left(\int\limits_{Q}\frac{\left\vert v(x)-v(y)\right\vert^{p-2}(v(x)-v(y))(w(x)-w(y))}{\vert x-y\vert^{n+ps}}dxdy+\int\limits_{\Omega}V_{2}(x)\left\vert v\right\vert^{p-2}vwdx\right) \\
=\int\limits_{\Omega}(a_{1}(x) \left\vert u\right\vert^{p^{*}
_{s}-2}uz+a_{2}(x) \left\vert v\right\vert^{p^{*}
_{s}-2}vw)dx+\lambda\int\limits_{\Omega}(H_{u}(x,u,v)z+ H_{v}(x,u,v)w)dx,
\end{array}%
\end{equation*}
for all $(z,w)\in W.$
\end{definition}
The main result of this paper  is the following.
\begin{theorem}
\label{theo02} Assume  that $s\in (0,1),n>ps$, $1<q<p<p_{s}^{\ast }$, and
that  equations \eqref{homogen}, \eqref{H} hold. If  $M$ satisfies conditions $(H_{1})-(H_{3})$, then there exists $\lambda^{\ast }>0$ such that for all $\lambda \in (0,\lambda ^{\ast })$, system \eqref{E} has at least two nontrivial weak  solutions.
\end{theorem}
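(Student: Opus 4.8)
The plan is to use the Nehari manifold method together with a fibering‑map analysis. I would define the energy functional $J_\lambda:W\to\mathbb R$ associated with \eqref{E} by
\[
J_\lambda(u,v)=\tfrac1p\bigl(\widehat M_1(\|u\|_{V_1}^p)+\widehat M_2(\|v\|_{V_2}^p)\bigr)-\tfrac1{p_s^*}\int_\Omega\bigl(a_1|u|^{p_s^*}+a_2|v|^{p_s^*}\bigr)\,dx-\lambda\int_\Omega H(x,u,v)\,dx,
\]
and the Nehari manifold $\mathcal N_\lambda=\{(u,v)\in W\setminus\{0\}:\langle J_\lambda'(u,v),(u,v)\rangle=0\}$. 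The homogeneity assumptions \eqref{homogen}, \eqref{H} make the fibering map $t\mapsto\phi_{(u,v)}(t)=J_\lambda(tu,tv)$ explicitly computable, with $\phi'_{(u,v)}(t)=0$ on $\mathcal N_\lambda$; splitting $\mathcal N_\lambda=\mathcal N_\lambda^+\cup\mathcal N_\lambda^0\cup\mathcal N_\lambda^-$ according to the sign of $\phi''_{(u,v)}(t)$, one shows for $\lambda$ small that $\mathcal N_\lambda^0=\{0\}$ is empty on $\mathcal N_\lambda$, so $\mathcal N_\lambda$ is a $C^1$ manifold and every critical point of $J_\lambda|_{\mathcal N_\lambda}$ is a free critical point.

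\textbf{Main steps.} First I would establish the geometric/analytic preliminaries: $J_\lambda$ is well defined and $C^1$ on $W$ (using $(H_1)$–$(H_3)$ and the Sobolev embeddings), $J_\lambda$ is coercive and bounded below on $\mathcal N_\lambda$ (using $(H_2)$ for the leading term, $(H_3)$ to control $\widehat M_i$ versus $M_i\cdot t$, the bound $|H|\le\gamma(|u|^q+|v|^q)$ with $q<p$, and the sign of the critical term after imposing the Nehari constraint), and $\mathcal N_\lambda^+,\mathcal N_\lambda^-$ are both nonempty with $\mathcal N_\lambda^0\setminus\{0\}=\emptyset$ provided $0<\lambda<\lambda^*$ for a constant $\lambda^*$ depending on $S$, $C_q$, $\|a_i\|_\infty$, $m_i$, $\theta_i$. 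Second, set $c_\lambda^\pm=\inf_{\mathcal N_\lambda^\pm}J_\lambda$; a standard argument (Ekeland's variational principle applied on $\mathcal N_\lambda^\pm$, using that the relevant minimizing sequences stay away from $\mathcal N_\lambda^0$) produces Palais–Smale sequences for $J_\lambda$ at levels $c_\lambda^\pm$. Third, and this is the crux, I would prove a local Palais–Smale condition: there is a threshold, of the form $c^*=\bigl(\tfrac1p-\tfrac1{p_s^*}\bigr)(mS)^{n/ps}/\|a\|_\infty^{(n-ps)/ps}$ (suitably written with $m=\min m_i$), below which every PS sequence has a convergent subsequence, by the concentration‑compactness principle for the fractional $p$‑Laplacian to rule out Sobolev‑type bubbling. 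Fourth, I would show $c_\lambda^+<0<c^*$ (test with a fixed function, using $q<p$ so the $\lambda H$‑term dominates for small $t$) and $c_\lambda^-<c^*$ for $\lambda$ small (test along the fibering map with a suitable truncated minimizer/extremal for $S$, controlling the critical term as in Brezis–Nirenberg‑type estimates). Then the PS condition yields minimizers $(u_1,v_1)\in\mathcal N_\lambda^+$ and $(u_2,v_2)\in\mathcal N_\lambda^-$ of $J_\lambda$, which are distinct since $c_\lambda^+\ne c_\lambda^-$; finally a maximum‑principle/regularity argument upgrades them to positive weak solutions (replacing $u,v$ by $|u|,|v|$ is legitimate since $f,g\ge0$ and the forms are even), giving the two nontrivial weak solutions.

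\textbf{Expected obstacle.} The hardest part is the local Palais–Smale analysis together with the energy estimate $c_\lambda^-<c^*$. For the PS condition, the Kirchhoff coefficients $M_i(\|u\|_{V_i}^p)$ complicate the splitting of the nonlinear term: one must first extract weak limits, show the $M_i$-values converge, and then apply the Brezis–Lieb lemma and the fractional concentration‑compactness lemma to the critical term, using $(H_1)$ to guarantee the Kirchhoff term does not destroy coercivity of the residual and $(H_3)$ to keep the energy decomposition below $c^*$. For $c_\lambda^-<c^*$ one needs careful control of $\widehat M_i$ along the fiber (only bounds, not the exact form of $M_i$, are available) combined with the standard but delicate estimates on the extremal functions $U_\varepsilon$ for $S$ on a bounded domain; getting the right power of $\lambda$ so that the estimate holds uniformly for $\lambda\in(0,\lambda^*)$ after possibly shrinking $\lambda^*$ is the technical heart of the proof.
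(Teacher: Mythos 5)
Your overall framework --- Nehari decomposition $\mathcal N_\lambda=\mathcal N_\lambda^+\cup\mathcal N_\lambda^0\cup\mathcal N_\lambda^-$, fibering-map analysis, coercivity on $\mathcal N_\lambda$, a local Palais--Smale condition below a critical energy threshold, and distinct minimizers on $\mathcal N_\lambda^{\pm}$ --- agrees with the paper. However, the threshold you propose, $c^*=\bigl(\tfrac1p-\tfrac1{p_s^*}\bigr)(mS)^{n/(sp)}\|a\|_\infty^{-(n-sp)/(sp)}$, is not the one this problem supports, and this is a genuine gap rather than a cosmetic one. In the PS argument one evaluates $J_\lambda-\tfrac1{p_s^*}\langle J_\lambda',\cdot\rangle$ and must use $(H_3)$ to pass from $\widehat M_i(A_i)$ to $\tfrac1{\theta_i}A_iM_i(A_i)$; this replaces the coefficient $\tfrac1p-\tfrac1{p_s^*}=\tfrac sn$ by $\tfrac1{\theta p}-\tfrac1{p_s^*}=\tfrac sn-\tfrac{\theta-1}{\theta p}$ and also inserts a factor $1/\theta$ multiplying $mS$, and in addition the subcritical term $\lambda C(u_*,v_*)$ in the residual energy has to be absorbed by a Young-type optimization, which subtracts a $\lambda$-dependent quantity $\lambda^{p/(p-q)}L$. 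The correct threshold is therefore
\[
\Bigl(\frac sn-\frac{\theta-1}{\theta p}\Bigr)a^{-n/(sp_s^*)}\Bigl(\frac{mS}{\theta}\Bigr)^{n/(sp)}-\lambda^{\frac{p}{p-q}}L,
\]
which is strictly smaller than your $c^*$ whenever $\theta>1$; with only $(H_1)$--$(H_3)$ on the Kirchhoff functions your larger threshold cannot be justified, and the compactness step would fail.

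Your route to the key estimate $c_\lambda^-<c^*$ (test along the fiber through a truncated extremal $U_\varepsilon$ for $S$ and run a Brezis--Nirenberg expansion) is also genuinely different from, and substantially harder than, what the paper does --- indeed you flag it yourself as the technical heart, and in this generality (only bounds on $M_i$, not their form) it would be quite delicate. The paper avoids extremal functions altogether: for an \emph{arbitrary} $(u_0,v_0)\neq(0,0)$ the ``critical part'' of the fiber energy $\zeta_{u_0,v_0}(t)=\tfrac1p\bigl(\widehat M_1(t^pA_1)+\widehat M_2(t^pA_2)\bigr)-\tfrac{t^{p_s^*}}{p_s^*}B$ satisfies, via $(H_2)$, $(H_3)$ and the Sobolev inequality \eqref{42}, $\sup_{t>0}\zeta_{u_0,v_0}(t)\ge\bigl(\tfrac sn-\tfrac{\theta-1}{\theta p}\bigr)a^{-n/(sp_s^*)}(mS/\theta)^{n/(sp)}$, and since $\zeta_{u_0,v_0}\to-\infty$ there is a $t_0$ where equality holds; then $J_\lambda(t_0u_0,t_0v_0)$ equals that constant minus $\lambda t_0^q C(u_0,v_0)$, which drops below the corrected threshold once $\lambda$ is small. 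This observation also explains why only the elementary Brezis--Lieb lemma (rather than the full concentration-compactness machinery you invoke) is needed in the PS step. Finally, the positivity/maximum-principle step you outline is unnecessary here: the theorem only asserts two nontrivial weak solutions.
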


This paper is organized as follows. In Section $\ref{S02}$, we present some notations and preliminary results related to the Nehari manifold and fibering maps. In Section $\ref{S03}$, we prove Theorem $\ref{theo02}$.

\section{The Nehari manifold method and fibering maps analysis}
\label{S02}
This section collects some basic results on the Nehari manifold method and the fibering maps analysis which will be used in the
forthcoming section, we refer the interested reader to \cite{13, 14, 21}, for more details. We begin by  considering the Euler-Lagrange functional $J_{\lambda}: W\rightarrow \mathbb{R}$, which is defined by
\begin{equation}
J_{\lambda}(u,v)=\frac{1}{p}(\widehat{M}_{1}(A_{1}(u))+\widehat{M}_{2}(A_{2}(v)))-\frac{1}{p_{s}^{*}}B(u,v)-\lambda C(u,v),
\end{equation}
where
%\begin{eqnarray*}
$$
  A_{i}\left( w\right) = \left\Vert w\right\Vert_{V_{i}}^{p}, 
  B\left( u,v\right) = \int\limits_{\Omega}(a_{1}(x) \left\vert u\right\vert^{p^{*}
_{s}}+a_{2}(x) \left\vert v\right\vert^{p^{*}
_{s}})dx,
C\left( u,v\right) =\int\limits_{\Omega }H(x,u,v)dx.
$$
We can easily verify that
 $J_{\lambda}\in C^{1}(W,\mathbb{R})$, moreover its derivative $J_{\lambda
}^{\prime }$ from the space  $W$ into its dual space $ W^{\prime }$ is given by
\begin{equation}\label{J'}
\langle J_{\lambda}^{\prime }(u,v),(u,v)\rangle= A_{1}(u){M}_{1}(A_{1}(u))+A_{2}(v){M}_{2}(A_{2}(v))-B(u,v)-\lambda qC(u,v).
\end{equation}
From the last equation, we can see that  the critical points of the functional $J_{\lambda}$  are exactly the weak solutions for problem \eqref{E}. Moreover, since the energy functional $J_{\lambda}$ is not bounded from below on $W,$  we shall  show that $J_{\lambda}$ is bounded from below on a suitable subset of $W,$ which is  known as the Nehari manifold and
is defined by
\begin{equation*}
\mathcal{N}_{\lambda}=\{(u,v)\in W\backslash \{(0,0)\},\langle J_{\lambda}^{\prime }(u,v),(u,v)\rangle _{W}=0\}.
\end{equation*}
It is clear that $(u,v)\in \mathcal{N}_{\lambda}$ if and only if
\begin{equation}\label{nlambd}
A_{1}(u){M}_{1}(A_{1}(u))+A_{2}(v){M}_{2}(A_{2}(v))-B(u,v)-\lambda q C(u,v)=0.
\end{equation}
Hence, from \eqref{J'}, we see that elements of $\mathcal{N}_{\lambda}$ correspond to nontrivial critical points which are solutions of problem \eqref{E}. 

It is useful to understand $\mathcal{N}_{\lambda}$ in terms of the
stationary points of the fibering maps $\varphi _{u,v}:(0,\infty) \rightarrow \mathbb{R}
$, defined by
%\begin{eqnarray*}
$$
\varphi _{u,v}(t)= J_{\lambda}(tu,tv)\\=\frac{1}{p}(\widehat{M}_{1}(t^{p}A_{1}(u))+\widehat{M}_{2}(t^{p}A_{2}(v)))-\frac{t^{p_{s}^{*}}}{p_{s}^{*}}B(u,v)-\lambda t^{q}C(u,v).
$$
%\end{eqnarray*}
A simple calculation shows that for all $t>0$, we have
\begin{eqnarray*}
\varphi _{u,v}^{\prime }(t) &=& t^{p-1}(A_{1}(u)M_{1}(t^{p}A_{1}(u))+A_{2}(v)M_{2}(t^{p}A_{2}(v))) \\
&&-t^{p^{\ast}_{s}-1}B(u,v)-\lambda qt^{q-1}C(u,v),
\end{eqnarray*}
and
\begin{eqnarray*}
  \varphi _{u,v}^{\prime\prime }(t) &=& (p-1)t^{p-2}\left(A_{1}(u)M_{1}(t^{p}A_{1}(u))+A_{2}(v)M_{2}(t^{p}A_{2}(v))\right)\\
  &&+pt^{2p-2}\left((A_{1}(u))^{2}M^{\prime }_{1}(t^{p}A_{1}(u))+(A_{2}(v))^{2}M^{\prime }_{2}(t^{p}A_{2}(v)\right) \\
  &&-(p^{\ast}_{s}-1)t^{p^{\ast}_{s}-2}B(u,v)-\lambda q(q-1)t^{q-2}C(u,v).
\end{eqnarray*}
It is easy to see that for all $t>0$, we have
$$
\varphi _{u,v}^{\prime }(t)=\langle J_{\lambda}^{\prime }(tu,tv),(u,v)\rangle
_{W}\\=\frac{1}{t^{2}}\langle J_{\lambda  }^{\prime }(tu,tv),(tu,tv)\rangle
_{W}.
$$
So, $(tu,tv)\in \mathcal{N}_{\lambda}$
 if and only if $\varphi _{u,v}^{\prime
}(t)=0.$
In the special
case, when $t=1$, we get $(u,v)\in \mathcal{N}_{\lambda}$, if and only if $%
\varphi _{u,v}^{\prime }(1)=0.$ On the other hand, from \eqref{nlambd}, we obtain
\begin{eqnarray}
\varphi _{u,v}^{\prime \prime }(1) &=&(p-1)\left(
A_{1}(u)M_{1}(A_{1}(u))+A_{2}(v)M_{2}(A_{2}(v))\right) -(p_{s}^{\ast
}-1)B(u,v)  \notag \\
&&+p\left( (A_{1}(u))^{2}M_{1}^{\prime
}(A_{1}(u))+(A_{2}(v))^{2}M_{2}^{\prime }(A_{2}(v))\right) -\lambda
q(q-1)C(u,v)  \notag \\
&=&p\left( (A_{1}(u))^{2}M_{1}^{\prime
}(A_{1}(u))+(A_{2}(v))^{2}M_{2}^{\prime }(A_{2}(v))\right)   \notag \\
&&-(p_{s}^{\ast }-p)B(u,v)-\lambda q(q-p)C(u,v)  \label{22} \\
&=&p\left( (A_{1}(u))^{2}M_{1}^{\prime
}(A_{1}(u))+(A_{2}(v))^{2}M_{2}^{\prime }(A_{2}(v))\right) +\lambda
q(p_{s}^{\ast }-q)C(u,v)  \notag \\
&&-(p_{s}^{\ast }-p)\left(
A_{1}(u)M_{1}(A_{1}(u))+A_{2}(v)M_{2}(A_{2}(v))\right)   \label{c} \\
&=&p\left( (A_{1}(u))^{2}M_{1}^{\prime
}(A_{1}(u))+(A_{2}(v))^{2}M_{2}^{\prime }(A_{2}(v))\right) -(p_{s}^{\ast
}-q)B(u,v)  \notag \\
&&+(p-q)\left( A_{1}(u)M_{1}(A_{1}(u))+A_{2}(v)M_{2}(A_{2}(v))\right) .
\label{bb}
\end{eqnarray}
Now, in order to obtain a multiplicity of solutions, we divide $\mathcal{N}_{\lambda}$ into three parts
% \begin{eqnarray*}
$$ \mathcal{N}_{\lambda }^{+} =\left\{ (u,v)\in \mathcal{N}_{\lambda
}:\varphi _{u,v}^{\prime \prime }(1)>0\right\} =\left\{ (u,v)\in W:\varphi
_{u,v}^{\prime}(1)=0 \;\mbox{and}\;\varphi _{u,v}^{\prime \prime }(1)>0\right\},
$$
$$ 
\mathcal{N}_{\lambda }^{-} =\left\{ (u,v)\in \mathcal{N}_{\lambda
}:\varphi _{u,v}^{\prime \prime }(1)<0\right\}=\left\{ (u,v)\in W:\varphi
_{u,v}^{\prime}(1)=0 \;\mbox{and}\;\varphi _{u,v}^{\prime \prime }(1)<0\right\} ,
$$
$$ 
\mathcal{N}_{\lambda }^{0} =\left\{ (u,v)\in \mathcal{N}_{\lambda
}:\varphi _{u,v}^{\prime \prime }(1)=0\right\}=\left\{ (u,v)\in W:\varphi
_{u,v}^{\prime}(1)=0 \;\mbox{and}\;\varphi _{u,v}^{\prime \prime }(1)=0\right\} .
$$
\begin{lemma}
\label{lem01} Suppose that $(u_{0},v_{0})$ is a local minimizer for $J_{\lambda}$ on $%
\mathcal{N}_{\lambda}$, with $(u_{0},v_{0})\not\in \mathcal{N}%
_{\lambda}^0.$
Then $(u_{0},v_{0})$ is a critical point of $J_{\lambda}.$
\end{lemma}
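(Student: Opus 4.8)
The plan is to prove Lemma~\ref{lem01} by the classical constrained‑minimization (Lagrange multiplier) argument on the Nehari manifold, following the scheme of \cite{13,14,21}. First I would introduce the auxiliary functional $\psi_{\lambda}\colon W\to\mathbb{R}$ defined by $\psi_{\lambda}(u,v)=\langle J_{\lambda}^{\prime}(u,v),(u,v)\rangle$. Since $A_{i},B,C$, and hence $J_{\lambda}$, are of class $C^{1}$, the map $\psi_{\lambda}$ is $C^{1}$ as well, and by \eqref{J'}--\eqref{nlambd} we have $\mathcal{N}_{\lambda}=\{(u,v)\in W\setminus\{(0,0)\}:\psi_{\lambda}(u,v)=0\}$. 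The key computation is that, for every $(u,v)\in\mathcal{N}_{\lambda}$,
\[
\langle\psi_{\lambda}^{\prime}(u,v),(u,v)\rangle=\varphi_{u,v}^{\prime\prime}(1),
\]
which follows by differentiating at $t=1$ the identity $\psi_{\lambda}(tu,tv)=\langle J_{\lambda}^{\prime}(tu,tv),(tu,tv)\rangle=t\,\varphi_{u,v}^{\prime}(t)$ (a consequence of the relation $\varphi_{u,v}^{\prime}(t)=\langle J_{\lambda}^{\prime}(tu,tv),(u,v)\rangle$ already recorded) together with $\varphi_{u,v}^{\prime}(1)=0$ on $\mathcal{N}_{\lambda}$.

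Next, since $(u_{0},v_{0})\notin\mathcal{N}_{\lambda}^{0}$, we have $\varphi_{u_{0},v_{0}}^{\prime\prime}(1)\neq0$, so the computation above shows in particular that $\psi_{\lambda}^{\prime}(u_{0},v_{0})\neq0$ in $W^{\prime}$. Hence $\mathcal{N}_{\lambda}$ is, in a neighbourhood of $(u_{0},v_{0})$, a $C^{1}$ Banach submanifold of codimension one, and the hypothesis that $(u_{0},v_{0})$ is a local minimizer of $J_{\lambda}$ restricted to $\mathcal{N}_{\lambda}$ permits the use of the Lagrange multiplier rule: there exists $\mu\in\mathbb{R}$ such that
\[
J_{\lambda}^{\prime}(u_{0},v_{0})=\mu\,\psi_{\lambda}^{\prime}(u_{0},v_{0})\quad\text{in }W^{\prime}.
\]

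To conclude, I would test this identity against $(u_{0},v_{0})$ itself, obtaining
\[
0=\langle J_{\lambda}^{\prime}(u_{0},v_{0}),(u_{0},v_{0})\rangle=\mu\,\langle\psi_{\lambda}^{\prime}(u_{0},v_{0}),(u_{0},v_{0})\rangle=\mu\,\varphi_{u_{0},v_{0}}^{\prime\prime}(1),
\]
where the first equality uses $(u_{0},v_{0})\in\mathcal{N}_{\lambda}$ and the last one uses the key computation. Since $\varphi_{u_{0},v_{0}}^{\prime\prime}(1)\neq0$, this forces $\mu=0$, whence $J_{\lambda}^{\prime}(u_{0},v_{0})=0$, i.e. $(u_{0},v_{0})$ is a critical point of $J_{\lambda}$.

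The only genuinely delicate point is the application of the multiplier rule in the Banach setting: one must make sure the constrained local minimum actually produces a multiplier, which requires $J_{\lambda},\psi_{\lambda}\in C^{1}$ and $\psi_{\lambda}^{\prime}(u_{0},v_{0})$ to be a nonzero bounded functional (so its kernel is a genuine tangent hyperplane). Both requirements reduce to the elementary regularity of $A_{i},B,C$ together with the single structural fact $(u_{0},v_{0})\notin\mathcal{N}_{\lambda}^{0}$ --- which is precisely why that hypothesis is imposed. Everything else is a short explicit computation with the fibering map $\varphi_{u,v}$.
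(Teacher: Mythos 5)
Your proof is correct and follows essentially the same Lagrange-multiplier argument as the paper: the paper's constraint functional $\beta$ is your $\psi_{\lambda}$, and the key identity $\langle\beta^{\prime}(u_{0},v_{0}),(u_{0},v_{0})\rangle=\varphi_{u_{0},v_{0}}^{\prime\prime}(1)$ is precisely your key computation, leading in both cases to $\delta=\mu=0$. The only (welcome) refinement in your version is the explicit remark that $\varphi_{u_{0},v_{0}}^{\prime\prime}(1)\neq0$ also guarantees $\psi_{\lambda}^{\prime}(u_{0},v_{0})\neq0$, justifying the applicability of the multiplier rule --- a regularity check the paper leaves implicit.
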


\begin{proof}
If $(u_{0},v_{0})$ is a local minimizer for $J_{\lambda}$ on $\mathcal{N}%
_{\lambda}$, then $(u_{0},v_{0})$ solves the 
following 
optimization problem
\begin{equation*}
\left\{
\begin{array}{l}
\displaystyle\min_{(u,v)\in \mathcal{N}_{\lambda}} J_{\lambda
}(u,v)=J_{\lambda}(u_{0},v_{0}), \\
\beta (u_{0},v_{0})=0,%
\end{array}%
\right.
\end{equation*}%
where
\begin{equation*}
\beta (u,v)=A_{1}(u){M}_{1}(A_{1}(u))+A_{2}(v){M}_{2}(A_{2}(v))-B(u,v)-\lambda q C(u,v).
\end{equation*}%
By the Lagrangian multipliers theorem, there exists $\delta \in \mathbb{R}$,
such that
\begin{equation}  \label{jprime}
J_{\lambda}^{\prime }(u_{0},v_{0})=\delta \beta ^{\prime }(u_{0},v_{0}).
\end{equation}%
Since $(u_{0},v_{0})\in \mathcal{N}_{\lambda }$, we obtain
\begin{equation}  \label{betaprime1}
\delta \langle \beta^{\prime }(u_{0},v_{0}),(u_{0},v_{0})\rangle _{W}=\langle
J_{\lambda }^{\prime }(u_{0},v_{0}),(u_{0},v_{0})\rangle _{W}=0.
\end{equation}%
Moreover, by \eqref{nlambd} and the constraint $\beta (u_{0},v_{0})=0$,
we have
\begin{eqnarray*}
  \langle \beta ^{\prime }(u_{0},v_{0}),(u_{0},v_{0})\rangle_{W}&=&p\left((A_{1}(u_{0}))^{2}M_{1}^{\prime}(A_{1}(u_{0}))+(A_{2}(v_{0}))^{2}M_{2}^{\prime }(A_{2}(v_{0}))\right) \\
  &&-(p^{*}_{s}-p)B(u_{0},v_{0})-\lambda q(q-p)C(u_{0},v_{0}) 
  =\varphi_{u_{0},v_{0}}^{\prime \prime }(1).
\end{eqnarray*}
Since $(u_{0},v_{0})\not\in\mathcal{N}_{\lambda}^{0}$, we have $\varphi_{u_{0},v_{0}}^{\prime \prime }(1)\neq0$. Thus, by \eqref{betaprime1} we get $\delta =0$.
Consequently, by substitution of $\delta$ in \eqref{jprime}, we obtain $
J_{\lambda }^{\prime }(u_{0},v_{0})=0$. This completes the proof of Lemma \ref{lem01}.
\end{proof}
In order to understand the Nehari manifold and fibering maps, let us
define the function $\psi _{u,v}:(0,\infty) \rightarrow \mathbb{R} $
as
follows
\begin{equation}\label{psiti}
\psi _{u,v}(t)=t^{p-q}\left(A_{1}(u)M_{1}(t^{p}A_{1}(u))+A_{2}(v)M_{2}(t^{p}A_{2}(v))\right)-t^{p^{*}_{s}-q}B(u,v)-\lambda qC(u,v).
\end{equation}
We note that
$
   t^{q-1}\psi _{u,v}(t)=\varphi^{\prime}_{u,v}(t).
$
Thus, it is easy to see that
 $(tu,tv)\in \mathcal{N}_{\lambda }$ if and only if
\begin{equation}  \label{psic}
\psi _{u,v}(t)=0.
\end{equation}
Moreover, by a direct computation, we get
\begin{eqnarray*}
\psi _{u,v}^{\prime }(t) &=&(p-q)t^{p-q-1}\left(
A_{1}(u)M_{1}(t^{p}A_{1}(u))+A_{2}(v)M_{2}(t^{p}A_{2}(v))\right)  \\
&&+pt^{2p-q-1}\left( A_{1}^{2}(u)M_{1}^{\prime
}(t^{p}A_{1}(u))+A_{2}^{2}(v)M_{2}^{\prime }(t^{p}A_{2}(v))\right)  \\
&&-(p_{s}^{\ast }-q)t^{p_{s}^{\ast }-q-1}B(u,v).
\end{eqnarray*}
Therefore
\begin{equation}\label{phiprime}
t^{q-1}\psi _{u,v}^{\prime }(t)=\varphi _{u,v}^{\prime \prime }(t).
\end{equation}
Hence, $(tu,tv)\in \mathcal{N}_{\lambda }^{+}$
 $\left (\text{respectively, } (tu,tv) \in \mathcal{N}_{\lambda  }^{-}\right)$ if and only if $\psi _{u,v}(t)=0$ and $\psi_{u,v}^{\prime }(t)>0$, (respectively, $\psi _{u,v}(t)=0$, and $\psi _{u,v}^{\prime}(t)<
0$). 
Put \begin{equation}\label{mm}
    m=\min(m_{1},m_{2}), \;\; \theta=\max(\theta_{1},\theta_{2}),
\end{equation}
and
\begin{equation}\label{lamda}
    \lambda_{\ast}=\frac{(mS)^{\frac{p_{s}^{\ast }-q}{p_{s}^{\ast }-p}}}{\gamma q\vert\Omega\vert^{\frac{p_{s}^{\ast }-q}{p_{s}^{\ast }}}}\left(\frac{p_{s}^{\ast }-p}{p_{s}^{\ast }-q}\right)\left(\frac{p-q}{(p_{s}^{\ast }-q)a}\right)^{\frac{p-q}{p_{s}^{\ast }-p}}.
\end{equation}
Now we shall prove the following crucial result.
\begin{lemma}
\label{lem03} Assume that  conditions $(H_{1})$ and $(H_{2})$ hold. Then for all $(u,v)\in \mathcal{N}_{\lambda}$, there exist $\lambda_{\ast}>0$ and unique $t_{1}>0$ and $t_{2}>0$, such that for each $\lambda \in (0,\lambda_{\ast})$, we have $(t_{1}u,t_{1}v)\in \mathcal{N}_{\lambda }^{+}$ and $(t_{2}u,t_{2}v)\in \mathcal{N}_{\lambda }^{-}$.
\end{lemma}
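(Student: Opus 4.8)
The plan is to reduce the statement to the scalar analysis of $\psi_{u,v}$ and to use the characterisation recorded just after \eqref{phiprime}: for $t>0$, $(tu,tv)\in\mathcal N_{\lambda}^{+}$ (resp.\ $\mathcal N_{\lambda}^{-}$) if and only if $\psi_{u,v}(t)=0$ and $\psi_{u,v}'(t)>0$ (resp.\ $<0$). Fix $(u,v)\neq(0,0)$; then $A_1(u)+A_2(v)>0$, $B(u,v)>0$ (since $a_1,a_2>0$ on $\Omega$), and $C(u,v)>0$. Regrouping \eqref{psiti} as $\psi_{u,v}(t)=h(t)-\lambda q\,C(u,v)$ with
\[
h(t)=t^{p-q}\bigl(A_1(u)M_1(t^{p}A_1(u))+A_2(v)M_2(t^{p}A_2(v))\bigr)-t^{p_{s}^{\ast}-q}B(u,v),
\]
we have $\psi_{u,v}'=h'$, so by \eqref{psic} the whole claim reduces to proving that the graph of $h$ meets the horizontal level $\lambda q\,C(u,v)$ in exactly two points $t_1<t_2$, with $h'(t_1)>0$ and $h'(t_2)<0$.

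First I would describe the shape of $h$. We have $h(0^{+})=0$; by $(H_2)$ (with $m$ as in \eqref{mm}) $h(t)\ge m\,t^{p-q}\bigl(A_1(u)+A_2(v)\bigr)-t^{p_{s}^{\ast}-q}B(u,v)$, which is strictly positive for small $t$ because $p-q<p_{s}^{\ast}-q$; hence $h>0$ on a right neighbourhood of $0$. By $(H_1)$, $t^{p-q}A_i(\cdot)M_i(t^{p}A_i(\cdot))=o\bigl(t^{p_{s}^{\ast}-q}\bigr)$ as $t\to\infty$, so $h(t)\to-\infty$. Therefore $\overline h:=\max_{t>0}h(t)$ is attained at some $t_{\max}\in(0,\infty)$ and $\overline h>0$. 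The main obstacle is to show that $h'$ vanishes exactly once, with $h'>0$ on $(0,t_{\max})$ and $h'<0$ on $(t_{\max},\infty)$: the derivative of the Kirchhoff part contains $M_i'$, whose sign is not prescribed, so one must use $(H_1)$, $(H_2)$ and the ordering $1<q<p<p_{s}^{\ast}$ with care — bounding the Kirchhoff contribution below, on the lower range, by $m$-multiples through $(H_2)$, and above, on the upper range, by a small multiple of $t^{p_{s}^{\ast}}$ through $(H_1)$, so that the competing term $t^{p_{s}^{\ast}-q}B(u,v)$ dominates beyond $t_{\max}$. Granting this, $h$ is strictly increasing on $(0,t_{\max})$ and strictly decreasing afterwards, so for each $c\in(0,\overline h)$ the level $c$ is attained by $h$ at exactly two points, one to the left and one to the right of $t_{\max}$, where $h'$ is respectively $>0$ and $<0$.

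It remains to choose $\lambda$ small. The key is a homogeneity: $A_i(ru)=r^{p}A_i(u)$, and the homogeneity of $B$ and of $H$ (see \eqref{H}) give $h_{ru,rv}(t)=r^{q}h_{u,v}(rt)$ and $C(ru,rv)=r^{q}C(u,v)$, so the inequality $\lambda q\,C(u,v)<\overline h$ is invariant under $(u,v)\mapsto r(u,v)$, and we may normalise $A_1(u)+A_2(v)=1$. For such $(u,v)$, estimating $\overline h$ from below by
\[
\max_{t>0}\bigl(m\,t^{p-q}-B(u,v)\,t^{p_{s}^{\ast}-q}\bigr)=\frac{p_{s}^{\ast}-p}{p_{s}^{\ast}-q}\,m\Bigl(\frac{m(p-q)}{(p_{s}^{\ast}-q)B(u,v)}\Bigr)^{\frac{p-q}{p_{s}^{\ast}-p}},
\]
and $C(u,v)$ from above by means of $|H|\le\gamma(|u|^{q}+|v|^{q})$ (from \eqref{H}), H\"older's inequality on $\Omega$, the embeddings \eqref{sp}--\eqref{SSS}, and $B(u,v)\le a\,S^{-p_{s}^{\ast}/p}$ (with $a$ a common bound for $a_1,a_2$), a direct computation yields $\lambda q\,C(u,v)<\overline h$ as soon as $\lambda<\lambda_{\ast}$, where $\lambda_{\ast}$ is exactly the constant \eqref{lamda} (the exponents there coming from the displayed maximum above). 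For such $\lambda$ we then have $\psi_{u,v}(0^{+})=-\lambda q\,C(u,v)<0$, $\psi_{u,v}(t_{\max})=\overline h-\lambda q\,C(u,v)>0$, and $\psi_{u,v}(t)\to-\infty$; by the unimodality of $h$ the function $\psi_{u,v}$ has exactly two zeros $0<t_1<t_{\max}<t_2$, with $h'(t_1)>0>h'(t_2)$. Hence $(t_1u,t_1v)\in\mathcal N_{\lambda}^{+}$ and $(t_2u,t_2v)\in\mathcal N_{\lambda}^{-}$, the numbers $t_1,t_2$ are unique, and taking $\lambda^{\ast}:=\lambda_{\ast}$ completes the proof.
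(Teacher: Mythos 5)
Your approach is close in spirit to the paper's, but it is organized differently and the difference matters. You decompose additively, $\psi_{u,v}(t)=h(t)-\lambda q\,C(u,v)$, and aim to show that the Kirchhoff-containing function $h$ is strictly unimodal, so that every level strictly between $0$ and $\max h$ is crossed exactly twice with the right derivative signs. The paper instead bounds $\psi_{u,v}(t)\ge (A(u,v))^{q/p}F_{u,v}(t)$ with the explicit two-monomial function
\[
F_{u,v}(t)=m\,t^{p-q}(A(u,v))^{\frac{p-q}{p}}-t^{p_s^\ast-q}S^{-p_s^\ast/p}a\,(A(u,v))^{\frac{p_s^\ast-q}{p}}-\lambda q\gamma S^{-q/p}|\Omega|^{\frac{p_s^\ast-q}{p_s^\ast}},
\]
whose unique maximizer $t_{\max}$ is computed in closed form; the condition $F_{u,v}(t_{\max})>0$ both produces $\lambda_\ast$ and gives a single concrete point where $\psi_{u,v}>0$, without ever touching $M_i'$. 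Your normalization to $A_1(u)+A_2(v)=1$ and the verification that it reproduces \eqref{lamda} is a clean way to see where the constant comes from, and it does match the paper's constant.

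The genuine gap is in the step you flag and then ``grant'': the strict unimodality of $h$. The assumptions $(H_1)$ and $(H_2)$ only constrain the size of $M_i$, not the sign or size of $M_i'$, which appears explicitly in $h'$ (and in $\psi_{u,v}'$) through $p\,t^{2p-q-1}\bigl(A_1^2M_1'+A_2^2M_2'\bigr)$. Bounding $M_i$ below by $m_i$ and controlling $M_i(\tau)=o(\tau^{p_s^\ast/p-1})$ via $(H_1)$ does not prevent $M_i'$ from oscillating, so the sketch you give does not yield that $h'$ changes sign exactly once. This is precisely the hurdle the paper's lower-bound trick is built to avoid: because $\psi_{u,v}\ge(A(u,v))^{q/p}F_{u,v}$ and $F_{u,v}$ is positive on an interval around $t_{\max}$, one obtains $\psi_{u,v}(t_{\max})>0$ without ever differentiating the Kirchhoff term. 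That said, the paper itself only concludes with the phrase ``by a variation of $\psi_{u,v}$'' and never rigorously establishes that $\psi_{u,v}$ has \emph{only} two zeros, so the uniqueness claim in Lemma~\ref{lem03} is not fully justified there either; but your route makes this missing monotonicity more essential, since you need the full shape of $h$ rather than a single positivity point.
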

\begin{proof}
We begin by noting that by \eqref{psiti}, we have  $$\psi _{u,v}(t)\rightarrow -\lambda qC(u,v),\;\mbox{ as }t\rightarrow 0^{+}, \;\mbox{and}\;\psi _{u,v}(t)\rightarrow -\infty,\;\mbox{as }t\rightarrow \infty.$$ 
Now, if we combine equations  \eqref{H} and \eqref{sp} with  the H\"{o}lder inequality, we obtain
\begin{eqnarray}  \label{42}
B(u,v)&\leq &\Vert a_{1}\Vert_{\infty}\Vert u\Vert _{p_{s}^{\ast }}^{p_{s}^{\ast }}+\Vert a_{2}\Vert_{\infty}\Vert v\Vert _{p_{s}^{\ast }}^{p_{s}^{\ast }} 
\leq  a(\Vert u\Vert _{p_{s}^{\ast }}^{p_{s}^{\ast }}+\Vert v\Vert _{p_{s}^{\ast }}^{p_{s}^{\ast }}) \nonumber\\
&\leq & a(S_{p,V_{1}}^{-\frac{p_{s}^{\ast }}{p}}(A_{1}(u))^{\frac{p_{s}^{\ast }}{p}}+S_{p,V_{2}}^{-\frac{p_{s}^{\ast }}{p}}(A_{2}(u))^{\frac{p_{s}^{\ast }}{p}}) \nonumber\\
&\leq & S^{-\frac{p_{s}^{\ast }}{p}}a(A(u,v))^{\frac{p_{s}^{\ast }}{p}},
\end{eqnarray}
and
\begin{eqnarray}  \label{43}
C(u,v)&\leq& \gamma (\left\Vert u\right\Vert _{q}^{q}+\left\Vert v\right\Vert _{q}^{q})
\leq \gamma\left\vert \Omega \right\vert ^{\frac{p_{s}^{\ast }-q}{p_{s}^{\ast }}
}(\left\Vert u\right\Vert _{p_{s}^{\ast }}^{q}+\left\Vert v\right\Vert _{p_{s}^{\ast }}^{q})\nonumber\\
&\leq& \gamma S^{-\frac{q}{p}}\left\vert \Omega \right\vert ^{\frac{p_{s}^{\ast }-q}{p_{s}^{\ast }}}(A(u,v))^{\frac{q}{p}},
\end{eqnarray}
where $a=\max(\Vert a_{1}\Vert_{\infty},\Vert a_{2}\Vert_{\infty})$,  $A(u,v)=\Vert (u,v)\Vert^{p},$  and $S$ is given by equation \eqref{SSS}.

On the other hand, by combining equations  \eqref{42}, \eqref{43} with $(H_{2})$, we obtain
\begin{eqnarray}
\psi_{u,v} \left( t\right)&\geq&t^{p-q}(m_{1}A_{1}(u)+m_{2}A_{2}(v))-t^{p^{\ast}_{s}-q}S^{-\frac{p_{s}^{\ast }}{p}}a(A(u,v))^{\frac{p_{s}^{\ast }}{p}}\notag \\
&&-\lambda q \gamma S^{-\frac{q}{p}}\left\vert \Omega \right\vert ^{
\frac{p_{s}^{\ast }-q}{p_{s}^{\ast }}}(A(u,v))^{\frac{q}{p}} \notag \\
 &\geq& mt^{p-q}A(u,v)-t^{p^{\ast}_{s}-q}S^{-\frac{p_{s}^{\ast }}{p}}a(A(u,v))^{\frac{p_{s}^{\ast }}{p}}\notag\\
&&-\lambda q \gamma S^{-\frac{q}{p}}\left\vert \Omega \right\vert ^{
\frac{p_{s}^{\ast }-q}{p_{s}^{\ast }}}(A(u,v))^{\frac{q}{p}}  
\geq (A(u,v))^{\frac{q}{p}}F_{u,v}(t),\label{fiu}
\end{eqnarray}%
where  $m$ is given by equation  \eqref{mm} and $F_{u,v}$ is defined for $t>0$ by
$$
  F_{u,v}(t) = mt^{p-q}(A(u,v))^{\frac{p-q}{p}}-t^{p^{\ast}_{s}-q}S^{-\frac{p_{s}^{\ast }}{p}}a(A(u,v))^{\frac{p_{s}^{\ast }-q}{p}}  -\lambda q \gamma S^{-\frac{q}{p}}\left\vert \Omega \right\vert ^{
\frac{p_{s}^{\ast }-q}{p_{s}^{\ast }}}.
$$
Since $1<q<p<p^{\ast}_{s}$, 
it is easy to see that $\underset{t\rightarrow 0^{+} }{\lim }F_{u,v}(t)<0$
and 
$\underset{t\rightarrow \infty }{\lim }F_{u,v}(t)=-\infty.$
So, by a simple calculation we can prove that $F_{u,v}$ attains its unique global maximum at
\begin{equation}
    t_{max}(u,v)=\left(\frac{m}{S^{-\frac{p_{s}^{\ast }}{p}}a}\left(\frac{p-q}{p^{\ast}_{s}-q}\right)\right)^{\frac{1}{p^{\ast}_{s}-p}}(A(u,v))^{\frac{-1}{p}}.
\end{equation}
Moreover,
\begin{equation}
    F_{u,v}(t_{max})=q\gamma S^{-\frac{q}{p}}\left\vert \Omega \right\vert ^{
\frac{p_{s}^{\ast }-q}{p_{s}^{\ast }}}(\lambda_{\ast}-\lambda),
\end{equation}
where $\lambda_{\ast}$ is given by \eqref{lamda}.

If we choose $\lambda<\lambda_{\ast}$, then  we get from $\eqref{fiu}$
\begin{equation}
    \psi_{u,v}(t_{max})\geq(A(u,v))^{\frac{q}{p}}F_{u,v}(t_{max})>0.
\end{equation}
Hence, 
by a
 variation of $\psi_{u,v}(t)$ there exist unique $t_{1}<t_{max}(u,v)$ and unique $t_{2}>t_{max}(u,v)$, such that $\psi^{\prime}_{u,v}(t_{1})>0$ and $\psi^{\prime}_{u,v}(t_{2})<0$. Moreover
$
   \psi_{u,v}(t_{1})=0=\psi_{u,v}(t_{2}).
$
Finally, it follows from (\ref{psic}) and (\ref{phiprime}) that $
(t_{1}u,t_{1}v)\in \mathcal{N}_{\lambda }^{+}$ and $(t_{2}u,t_{2}v)\in \mathcal{N}
_{\lambda }^{-}$.
This completes the proof of Lemma~\ref{lem03}.
\end{proof}
We can see from Lemma \ref{lem03}
 that sets $ \mathcal{N}_{\lambda }^{+}$ and $ \mathcal{N}_{\lambda }^{-}$ are nonempty. In the following lemma we shall
 give a property related to $ \mathcal{N}_{\lambda }^{0}$.
\begin{lemma}
\label{lem04} Assume that condition $(H_{2})$ holds. Then  for all $\lambda \in \left( 0,\lambda_{\ast } \right)$, we have $ \mathcal{N}_{\lambda }^{0}=\emptyset $.
\end{lemma}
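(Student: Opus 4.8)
The plan is to argue by contradiction, converting membership in $\mathcal{N}_{\lambda}^{0}$ into a statement about the fibering map and then contradicting the analysis of $\psi_{u,v}$ already performed in the proof of Lemma~\ref{lem03}. So suppose there exist $\lambda\in(0,\lambda_{\ast})$ and $(u,v)\in\mathcal{N}_{\lambda}^{0}$. First I would extract the two consequences this has for $\psi_{u,v}$: since $\mathcal{N}_{\lambda}^{0}\subset\mathcal{N}_{\lambda}$, identity \eqref{psic} gives $\psi_{u,v}(1)=0$; and since $\varphi_{u,v}^{\prime\prime}(1)=0$ by the very definition of $\mathcal{N}_{\lambda}^{0}$, identity \eqref{phiprime} (at $t=1$) gives $\psi_{u,v}^{\prime}(1)=0$. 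Thus $t=1$ is at once a zero and a critical point of $\psi_{u,v}$.

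Next I would recall what the proof of Lemma~\ref{lem03} establishes for exactly this range $\lambda<\lambda_{\ast}$: by \eqref{psiti} one has $\psi_{u,v}(t)\to-\lambda qC(u,v)\le 0$ as $t\to 0^{+}$ and $\psi_{u,v}(t)\to-\infty$ as $t\to\infty$, while estimate \eqref{fiu} together with $\lambda<\lambda_{\ast}$ yields $\psi_{u,v}(t_{max}(u,v))\ge(A(u,v))^{\frac{q}{p}}F_{u,v}(t_{max}(u,v))>0$. Hence $\psi_{u,v}$ is strictly positive at $t_{max}(u,v)$ and non-positive at both ends of $(0,\infty)$, so that, as in Lemma~\ref{lem03}, it has exactly two zeros $0<t_{1}<t_{max}(u,v)<t_{2}$, with $\psi_{u,v}^{\prime}(t_{1})>0$ and $\psi_{u,v}^{\prime}(t_{2})<0$ (equivalently $(t_{1}u,t_{1}v)\in\mathcal{N}_{\lambda}^{+}$ and $(t_{2}u,t_{2}v)\in\mathcal{N}_{\lambda}^{-}$). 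Since $\psi_{u,v}(1)=0$ whereas $\psi_{u,v}(t_{max}(u,v))>0$, we have $1\neq t_{max}(u,v)$, and because $t_{1},t_{2}$ are the only zeros of $\psi_{u,v}$ it follows that $1=t_{1}$ or $1=t_{2}$. In either case $\psi_{u,v}^{\prime}(1)=\psi_{u,v}^{\prime}(t_{1})>0$ or $\psi_{u,v}^{\prime}(1)=\psi_{u,v}^{\prime}(t_{2})<0$, which contradicts $\psi_{u,v}^{\prime}(1)=0$. Therefore no such $(u,v)$ exists and $\mathcal{N}_{\lambda}^{0}=\emptyset$ for all $\lambda\in(0,\lambda_{\ast})$.

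I expect the only delicate point to be the claim that $\psi_{u,v}$ has precisely two zeros and that $\psi_{u,v}^{\prime}$ is strictly signed there; but this is exactly the ``variation of $\psi_{u,v}$'' step already used in Lemma~\ref{lem03} (where $\psi_{u,v}$ is bounded from below by the unimodal profile $(A(u,v))^{\frac{q}{p}}F_{u,v}$, and $F_{u,v}(t_{max}(u,v))=q\gamma S^{-\frac{q}{p}}\vert\Omega\vert^{\frac{p_{s}^{\ast}-q}{p_{s}^{\ast}}}(\lambda_{\ast}-\lambda)>0$ precisely when $\lambda<\lambda_{\ast}$), so for the present lemma it suffices to quote it. A more computational alternative would combine $\psi_{u,v}(1)=0$ and $\psi_{u,v}^{\prime}(1)=0$ with the identities \eqref{22}--\eqref{bb} and the embedding estimates \eqref{42}, \eqref{43} to produce incompatible lower and upper bounds on $\Vert(u,v)\Vert$ when $\lambda<\lambda_{\ast}$; the obstacle along that route is that the term $p\big((A_{1}(u))^{2}M_{1}^{\prime}(A_{1}(u))+(A_{2}(v))^{2}M_{2}^{\prime}(A_{2}(v))\big)$ occurring in $\varphi_{u,v}^{\prime\prime}(1)$ has a sign not controlled by $(H_{2})$ alone, which is why the fibering-map argument above is the natural one.
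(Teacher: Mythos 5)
Your argument takes a genuinely different route from the paper's. The paper's proof of Lemma~\ref{lem04} never appeals to the fibering-map structure established in Lemma~\ref{lem03}; it works directly with the two algebraic representations \eqref{c} and \eqref{bb} of $\varphi_{u,v}^{\prime\prime}(1)$. Applying $(H_2)$ together with \eqref{43} to \eqref{c} gives an inequality of the form $pD\geq(p_s^\ast-p)mA(u,v)-\lambda q(p_s^\ast-q)\gamma S^{-q/p}\vert\Omega\vert^{(p_s^\ast-q)/p_s^\ast}(A(u,v))^{q/p}$, and applying $(H_2)$ together with \eqref{42} to \eqref{bb} gives $pD\leq(p_s^\ast-q)S^{-p_s^\ast/p}a(A(u,v))^{p_s^\ast/p}-(p-q)mA(u,v)$, where $D=(A_1(u))^2M_1^{\prime}(A_1(u))+(A_2(v))^2M_2^{\prime}(A_2(v))$. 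You correctly identified $pD$ as the term that $(H_2)$ does not control in sign, and concluded that the computational route is blocked. What you missed is that $pD$ enters \emph{both} representations with exactly the same coefficient, so chaining the two one-sided bounds through $pD$ eliminates it entirely, yielding \eqref{da}; this is precisely the trick the paper's proof is built on, and it is why a contradiction can be extracted using $(H_2)$ alone, with no appeal to $(H_1)$ or to the geometry of $\psi_{u,v}$.

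Your fibering-map argument, by contrast, has a genuine gap at exactly the point you flagged as delicate. From $(u,v)\in\mathcal{N}_\lambda^0$ you correctly get $\psi_{u,v}(1)=0$ and $\psi_{u,v}^\prime(1)=0$ via \eqref{psic} and \eqref{phiprime}, and $\lambda<\lambda_\ast$ does give $\psi_{u,v}(t_{\max})>0$ via \eqref{fiu}, whence $1\neq t_{\max}$. But the step ``$t_1,t_2$ are the \emph{only} zeros of $\psi_{u,v}$, and $\psi_{u,v}^\prime$ is strictly signed there'' does not follow from anything actually proved: the lower bound $\psi_{u,v}\geq(A(u,v))^{q/p}F_{u,v}$ only locates a region where $\psi_{u,v}$ is strictly positive, and says nothing about the number or transversality of its zeros on $\{F_{u,v}\leq 0\}$. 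Since $M_1,M_2$ are merely continuous and bounded below, $t\mapsto t^{p-q}M_i(t^pA_i)$ can oscillate, so $\psi_{u,v}$ can in principle have a zero with vanishing derivative in that region; ruling this out is precisely the content of Lemma~\ref{lem04}, so you cannot quote it from Lemma~\ref{lem03} without circularity. Note also that your appeal to $\psi_{u,v}(t)\to-\infty$ as $t\to\infty$ silently uses $(H_1)$, which is not among the hypotheses of this lemma; the paper's algebraic proof avoids that as well.
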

\begin{proof}
We shall argue by contradiction. Assume that there exists $\lambda>0$ in $(0,\lambda_{\ast})$ such that  $\mathcal{N}_{\lambda}^0\neq \emptyset$. Let  $(u_{0},v_{0})\in \mathcal{N}_{\lambda}^0.$ 
Then invoking $(H_{2})$,  $\eqref{c}$ and $\eqref{43}$, we have
\begin{eqnarray}
  0=\varphi''_{u}(1)&=&p\left((A_{1}(u))^{2}M_{1}^{\prime}(A_{1}(u))+(A_{2}(v))^{2}M_{2}^{\prime }(A_{2}(v))\right) \notag\\
  &&-(p^{*}_{s}-p)\left(A_{1}(u)M_{1}(A_{1}(u))+A_{2}(v)M_{2}(A_{2}(v))\right)+\lambda q(p^{*}_{s}-q)C(u,v)\notag\\
  &\leq&p\left((A_{1}(u))^{2}M_{1}^{\prime}(A_{1}(u))+(A_{2}(v))^{2}M_{2}^{\prime }(A_{2}(v))\right)\notag\\
  &&-(p^{*}_{s}-p)\left(m_{1}A_{1}(u)+m_{2}A_{2}(v)\right)+\lambda q(p^{*}_{s}-q)C(u,v)\notag\\
   &\leq& p\left((A(u))^{2}M^{\prime}(A(u))+(A(v))^{2}N^{\prime }(A(v))\right)-(p^{*}_{s}-p)m A(u,v)\notag\\
   &&+\lambda q(p^{*}_{s}-q)\gamma S^{-\frac{q}{p}}\vert\Omega\vert^{\frac{p_{s}^{\ast}-q}{p_{s}^{\ast}}}(A(u,v))^{\frac{q}{p}}.\label{fi}
\end{eqnarray}
On the other hand, by
 $(H_{2})$, $\eqref{bb}$ and $\eqref{42}$, one has
\begin{eqnarray}
  0=\varphi''_{u}(1)&=&p\left((A_{1}(u))^{2}M_{1}^{\prime}(A_{1}(u))+(A_{2}(v))^{2}M_{2}^{\prime }(A_{2}(v))\right)\notag\\
  &&+(p-q)\left(A_{1}(u)M_{1}(A_{1}(u))+A_{2}(v)M_{2}(A_{2}(v))\right)-(p^{*}_{s}-q)B(u,v) \notag\\
  &\geq&p\left((A_{1}(u))^{2}M_{1}^{\prime}(A_{1}(u))+(A_{2}(v))^{2}M_{2}^{\prime }(A_{2}(v))\right)\notag\\
  &&+(p-q)\left(m_{1}A_{1}(u)+m_{2}A_{2}(v)\right)-(p^{*}_{s}-q)B(u,v) \notag\\
  &\geq& p\left((A_{1}(u))^{2}M_{1}^{\prime}(A_{1}(u))+(A_{2}(v))^{2}M_{2}^{\prime }(A_{2}(v))\right)\notag\\
  &&+(p-q)m A(u,v)-(p^{*}_{s}-q)S^{-\frac{p^{\ast}_{s}}{p}}a (A(u,v))^{\frac{p_{s}^{\ast}}{p}}.\label{fii}
\end{eqnarray}
Combining $\eqref{fi}$ and $\eqref{fii}$, we get
\begin{equation}\label{da}
    \lambda \geq\frac{m(A(u,v))^{\frac{p-q}{p}}-S^{-\frac{p^{\ast}_{s}}{p}}a(A(u,v))^{\frac{p^{\ast}_{s}-q}{p}}}{q\gamma S^{-\frac{q}{p}}\vert\Omega\vert^{\frac{p_{s}^{\ast}-q}{p_{s}^{\ast}}}}.
\end{equation}
Next, we define the function $H$ on $(0,\infty)$ by
\begin{equation*}
   H(t)=\frac{mt^{\frac{p-q}{p}}-S^{-\frac{p^{\ast}_{s}}{p}}
   a t^{\frac{p^{\ast}_{s}-q}{p}}}{q\gamma S^{-\frac{q}{p}}\vert\Omega\vert^{\frac{p_{s}^{\ast}-q}{p_{s}^{\ast}}}}.
\end{equation*}
 Since $1<q<p<p^{\ast}_{s}$,
 it follows that
  $\underset{t\rightarrow0^{+}}{\lim}H(t)=0$ and $\underset{t\rightarrow\infty}{\lim}H(t)=-\infty$. A simple computation 
  now
  shows that $H$ attains its maximum at
\begin{equation*}
    \tilde{t}=\left(\left(\frac{p-q}{p^{\ast}_{s}-q}\right)\frac{mS^{\frac{p^{\ast}_{s}}{p}}}
    {a}\right)^{\frac{p}{p^{\ast}_{s}-p}},
\end{equation*}
and
\begin{equation}\label{daa}
\underset{t>0}{\max}\; H(t)=H(\tilde{t})=\lambda_{\ast}.
\end{equation}
Hence it follows from $\eqref{da}$ and $\eqref{daa}$, 
$
    \lambda\geq\underset{t>0}{\max} \;H(t)=\lambda_{\ast},
$
which contradicts $\lambda\in(0,\lambda_{\ast})$. Therefore we can conclude that that indeed
$\mathcal{N}_{\lambda }^{0}=\emptyset $, for $\lambda\in(0,\lambda_{\ast})$.
This completes the proof of Lemma~\ref{lem04}.
\end{proof}
\begin{lemma}
\label{lem05} Assume that  conditions $(H_{2})$ and $(H_{3})$ hold. Then $J_{\lambda }$ is coercive and bounded from below on$\
\mathcal{N}_{\lambda }$.
\end{lemma}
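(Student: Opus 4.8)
The plan is to show coercivity directly from the definition of $J_\lambda$ on $\mathcal{N}_\lambda$, using the Nehari constraint to eliminate the critical term $B(u,v)$, and then controlling the remaining terms via $(H_2)$, $(H_3)$ and the estimate \eqref{43}. First I would take $(u,v)\in\mathcal{N}_\lambda$, so that by \eqref{nlambd} we have
\begin{equation*}
B(u,v)=A_{1}(u)M_{1}(A_{1}(u))+A_{2}(v)M_{2}(A_{2}(v))-\lambda q C(u,v).
\end{equation*}
Substituting this into the expression for $J_\lambda(u,v)$ gives
\begin{equation*}
J_{\lambda}(u,v)=\frac{1}{p}\sum_{i}\widehat{M}_{i}(A_{i}(\cdot))-\frac{1}{p_{s}^{*}}\Bigl(\sum_{i}A_{i}(\cdot)M_{i}(A_{i}(\cdot))-\lambda q C(u,v)\Bigr)-\lambda C(u,v),
\end{equation*}
which after grouping becomes
\begin{equation*}
J_{\lambda}(u,v)=\sum_{i}\Bigl(\frac{1}{p}\widehat{M}_{i}(A_{i}(\cdot))-\frac{1}{p_{s}^{*}}A_{i}(\cdot)M_{i}(A_{i}(\cdot))\Bigr)+\lambda\Bigl(\frac{q}{p_{s}^{*}}-1\Bigr)C(u,v).
\end{equation*}

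Next I would invoke $(H_3)$: since $M_i(t)t\le\theta_i\widehat{M}_i(t)$ with $\theta_i<p_s^*/p$, we have $\widehat M_i(t)\ge\frac{1}{\theta_i}M_i(t)t$, hence
\begin{equation*}
\frac{1}{p}\widehat{M}_{i}(t)-\frac{1}{p_{s}^{*}}M_{i}(t)t\ \ge\ \Bigl(\frac{1}{p\theta_i}-\frac{1}{p_{s}^{*}}\Bigr)M_{i}(t)t\ \ge\ \Bigl(\frac{1}{p\theta}-\frac{1}{p_{s}^{*}}\Bigr)m\,t,
\end{equation*}
using $(H_2)$ (namely $M_i(t)\ge m_i\ge m$) and $\theta=\max(\theta_1,\theta_2)<p_s^*/p$, so the constant $\kappa:=\bigl(\frac{1}{p\theta}-\frac{1}{p_s^*}\bigr)m$ is strictly positive. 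Applying this with $t=A_i(u)$ and $t=A_i(v)$ and summing yields
\begin{equation*}
J_{\lambda}(u,v)\ \ge\ \kappa\bigl(A_{1}(u)+A_{2}(v)\bigr)-\lambda\Bigl(1-\frac{q}{p_{s}^{*}}\Bigr)C(u,v)\ =\ \kappa\,\Vert(u,v)\Vert^{p}-\lambda\Bigl(1-\frac{q}{p_{s}^{*}}\Bigr)C(u,v).
\end{equation*}
Finally, by \eqref{43} we bound $C(u,v)\le\gamma S^{-q/p}|\Omega|^{(p_s^*-q)/p_s^*}\Vert(u,v)\Vert^{q}$, so that
\begin{equation*}
J_{\lambda}(u,v)\ \ge\ \kappa\,\Vert(u,v)\Vert^{p}-\lambda\,c\,\Vert(u,v)\Vert^{q}
\end{equation*}
for a constant $c>0$ independent of $(u,v)$. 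Since $q<p$, the right-hand side tends to $+\infty$ as $\Vert(u,v)\Vert\to\infty$ and is bounded below over all of $W$; this gives both coercivity and boundedness below of $J_\lambda$ on $\mathcal{N}_\lambda$.

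I expect the only delicate point to be the sign and positivity of the coefficient $\kappa$: one must check carefully that $(H_3)$ together with $(H_2)$ forces $\frac{1}{p}\widehat M_i(t)-\frac{1}{p_s^*}M_i(t)t$ to be bounded below by a positive multiple of $t$, which is exactly where the hypothesis $\theta_i<p_s^*/p$ (rather than merely $\theta_i$ finite) is used; everything else is routine substitution and the already-established inequality \eqref{43}. A minor subtlety is that $(H_3)$ only gives a one-sided bound, so one should be slightly careful to use $\widehat M_i\ge 0$ (which follows from $M_i>0$) when needed, but no additional growth assumption on $M_i$ from above is required for this lemma.
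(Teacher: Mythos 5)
Your proof is correct and follows essentially the same route as the paper: substitute $B(u,v)$ via the Nehari constraint \eqref{nlambd}, use $(H_3)$ to lower-bound $\widehat M_i$ by $\frac{1}{\theta_i}M_i(t)t$, then $(H_2)$ to lower-bound $M_i$ by $m$, and finally control $C(u,v)$ with \eqref{43}, with coercivity following from $q<p$ and $\theta p<p_s^*$. The intermediate grouping into a per-$i$ sum is a cosmetic variation; the substance and each estimate used coincide with the paper's argument.
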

\begin{proof}
Let $(u,v)\in \mathcal{N}_{\lambda }$. Then  by $\eqref{nlambd} $, we get
\begin{equation*}
B(u,v)=A_{1}(u)M_{1}(A_{1}(u))+A_{2}(v)M_{2}(A_{2}(v))-\lambda qC(u,v).
\end{equation*}
Therefore
\begin{eqnarray*}
% \nonumber to remove numbering (before each equation)
  J_{\lambda }(u,v) &=& \frac{1}{p}(\widehat{M}_{1}(A_{1}(u))+\widehat{M}_{2}(A_{2}(v)))-\frac{1}{p^{\ast}_{s}}(A_{1}(u)M_{1}(A_{1}(u)) \\
  && +A_{2}(v)M_{2}(A_{2}(v))) -\lambda \left(1- \frac{q}{p^{\ast}_{s}}\right)C(u,v).
\end{eqnarray*}
Moreover, by   $(H_{2}),(H_{3})$ and  \eqref{43}, we have
\begin{eqnarray*}
  J_{\lambda }(u,v)&\geq&\frac{1}{\theta_{1} p}A_{1}(u)M_{1}(A_{1}(u))+\frac{1}{\theta_{2} p}A_{2}(v)M_{2}(A_{2}(v))-\frac{1}{p^{\ast}_{s}}A_{1}(u)M_{1}(A_{1}(u)) \\
  && -\frac{1}{p^{\ast}_{s}}A_{2}(v)M_{2}(A_{2}(v))-\lambda \left(1- \frac{q}{p^{\ast}_{s}}\right)C(u,v)\\
  &\geq&\left(\frac{1}{\theta p}-\frac{1}{p^{\ast}_{s}}\right)(A_{1}(u)M_{1}(A_{1}(u))+A_{2}(v)M_{2}(A_{2}(v)))-\lambda \left(1- \frac{q}{p^{\ast}_{s}}\right)C(u,v) \\
  &\geq&\left(\frac{1}{\theta p}-\frac{1}{p^{\ast}_{s}}\right)(m_{1}A_{1}(u)+m_{2}A_{2}(v))-\lambda \left(1- \frac{q}{p^{\ast}_{s}}\right)C(u,v) \\
  &\geq& m\left(\frac{1}{\theta p}-\frac{1}{p^{\ast}_{s}}\right)A(u,v)-\lambda \left(1- \frac{q}{p^{\ast}_{s}}\right)\gamma S^{-\frac{q}{p}}\left\vert \Omega \right\vert ^{
\frac{p_{s}^{\ast }-q}{p_{s}^{\ast }}}(A(u,v))^{\frac{q}{p}}.
\end{eqnarray*}
Since $q<p$ and $\theta p<p^{\ast}_{s}$, it follows that  $J_{\lambda }$ is coercive and bounded from below on $\mathcal{N}_{\lambda }$.
This completes the proof of Lemma~\ref{lem05}.
\end{proof}
By Lemma (\ref{lem04}), we can write $\mathcal{N}_{\lambda }=
\mathcal{N}_{\lambda }^{+}\cup \mathcal{N}_{\lambda }^{-}$, and by
Lemma (\ref{lem05}), we can define
\begin{equation*}
\alpha _{\lambda }^{-}=\underset{(u,v)\in \mathcal{N}_{\lambda  }^{-}}{
\inf }J_{\lambda }(u,v)\text{ and }\alpha _{\lambda  }^{+}=\underset{
(u,v)\in \mathcal{N}_{\lambda }^{+}}{\inf }J_{\lambda }(u,v).
\end{equation*}
\section{Proof of the main result}
\label{S03}
In this section,   we shall prove the main result of this paper (Theorem \ref{theo02}). First, we need to prove two propositions.
\begin{proposition} \label{T0}
 Assume that  conditions $(H_{2})$ and $(H_{3})$ hold. Then there exist $t_0>0$ and  $(u_{0},v_{0})\in W\backslash \{0\}$,  with $(u_{0},v_{0})>0$ in$\
\mathbb{R}^{n}$, such that
\begin{equation}  \label{**}
\frac{1}{p}(\widehat{M}_{1}(A_{1}(u_{0})t_0^{p})+\widehat{M_{2}}(A_{2}({v_{0}})t_{0}^{p}))-\frac{t_0^{p_{s}^{\ast }}}{p^{\ast}_{s}}B(u_{0},v_{0})=\left(\frac{s}{n}-\frac{\theta-1}{\theta p}\right)a ^{\frac{-n}{sp_{s}^{\ast }}}(\frac{mS}{\theta})^{\frac{n}{s p}}.
\end{equation}
\end{proposition}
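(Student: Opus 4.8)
The plan is to reduce everything to a one‑variable problem: freeze an arbitrary admissible positive pair $(u_{0},v_{0})$ in the energy, drop the lower‑order term $C$, and show that the global maximum of the resulting function of $t$ dominates the prescribed constant; an intermediate value argument then produces the point $t_{0}$ realizing \eqref{**}. Note that the left‑hand side of \eqref{**}, viewed as a function of $t$, is exactly the part of the fibering map $\varphi_{u_{0},v_{0}}$ that does not involve the compact term, namely $\varphi_{u_{0},v_{0}}(t)+\lambda t^{q}C(u_{0},v_{0})$, so controlling its size is really the content of the proposition.

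Concretely, I would fix any $(u_{0},v_{0})\in W$ with $u_{0}>0$, $v_{0}>0$ in $\Omega$ (so $B(u_{0},v_{0})>0$, since $a_{1},a_{2}>0$ on $\Omega$), and set
\[
g(t)=\frac{1}{p}\bigl(\widehat{M}_{1}(t^{p}A_{1}(u_{0}))+\widehat{M}_{2}(t^{p}A_{2}(v_{0}))\bigr)-\frac{t^{p_{s}^{\ast}}}{p_{s}^{\ast}}B(u_{0},v_{0}),\qquad t>0,
\]
whose derivative is $g'(t)=t^{p-1}\bigl(A_{1}(u_{0})M_{1}(t^{p}A_{1}(u_{0}))+A_{2}(v_{0})M_{2}(t^{p}A_{2}(v_{0}))\bigr)-t^{p_{s}^{\ast}-1}B(u_{0},v_{0})$. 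Since $\widehat{M}_{i}(0)=0$ one has $g(0^{+})=0$; by $(H_{2})$, $g'(t)\ge t^{p-1}\bigl(m_{1}A_{1}(u_{0})+m_{2}A_{2}(v_{0})-t^{p_{s}^{\ast}-p}B(u_{0},v_{0})\bigr)>0$ for small $t$, so $g>0$ near $0$; and by $(H_{1})$, $\widehat{M}_{i}(\tau)=o(\tau^{p_{s}^{\ast}/p})$ as $\tau\to\infty$, hence $g(t)\to-\infty$. Therefore $g$ attains a positive global maximum at some $\sigma>0$ with $g'(\sigma)=0$, and the range of $g$ on $(0,\infty)$ is $(-\infty,g(\sigma)]$.

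The main step is to show $g(\sigma)\ge c^{\ast}$, where $c^{\ast}$ denotes the right‑hand side of \eqref{**}. From $g'(\sigma)=0$ I get $A_{1}(u_{0})M_{1}(\sigma^{p}A_{1}(u_{0}))+A_{2}(v_{0})M_{2}(\sigma^{p}A_{2}(v_{0}))=\sigma^{p_{s}^{\ast}-p}B(u_{0},v_{0})$. Inserting this into $(H_{3})$ (with $\theta=\max(\theta_{1},\theta_{2})$) gives $\widehat{M}_{1}(\sigma^{p}A_{1}(u_{0}))+\widehat{M}_{2}(\sigma^{p}A_{2}(v_{0}))\ge\frac{\sigma^{p}}{\theta}\sigma^{p_{s}^{\ast}-p}B(u_{0},v_{0})$, hence $g(\sigma)\ge\bigl(\frac{1}{\theta p}-\frac{1}{p_{s}^{\ast}}\bigr)\sigma^{p_{s}^{\ast}}B(u_{0},v_{0})$, the coefficient being positive because $\theta p<p_{s}^{\ast}$. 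Next $(H_{2})$ gives $\sigma^{p_{s}^{\ast}-p}B(u_{0},v_{0})\ge m\,A(u_{0},v_{0})$, so $\sigma^{p_{s}^{\ast}}\ge\bigl(mA(u_{0},v_{0})/B(u_{0},v_{0})\bigr)^{p_{s}^{\ast}/(p_{s}^{\ast}-p)}$, and \eqref{42} gives $B(u_{0},v_{0})\le a\,S^{-p_{s}^{\ast}/p}A(u_{0},v_{0})^{p_{s}^{\ast}/p}$. Since the power of $B(u_{0},v_{0})$ that survives is $1-\frac{p_{s}^{\ast}}{p_{s}^{\ast}-p}=-\frac{p}{p_{s}^{\ast}-p}<0$, this upper bound becomes a lower bound, the powers of $A(u_{0},v_{0})$ cancel exactly, and one arrives at $g(\sigma)\ge\bigl(\frac{1}{\theta p}-\frac{1}{p_{s}^{\ast}}\bigr)(mS)^{p_{s}^{\ast}/(p_{s}^{\ast}-p)}a^{-p/(p_{s}^{\ast}-p)}$. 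Using the identities $\frac{s}{n}=\frac{1}{p}-\frac{1}{p_{s}^{\ast}}$, $\frac{n}{sp}=\frac{p_{s}^{\ast}}{p_{s}^{\ast}-p}$ and $\frac{n}{sp_{s}^{\ast}}=\frac{p}{p_{s}^{\ast}-p}$, one rewrites $c^{\ast}=\bigl(\frac{1}{\theta p}-\frac{1}{p_{s}^{\ast}}\bigr)(mS/\theta)^{p_{s}^{\ast}/(p_{s}^{\ast}-p)}a^{-p/(p_{s}^{\ast}-p)}$, and since $\theta\ge1$ we conclude $g(\sigma)\ge c^{\ast}$.

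Finally, since $0<c^{\ast}\le g(\sigma)$ and $g$ has range $(-\infty,g(\sigma)]$, the intermediate value theorem gives $t_{0}\in(0,\sigma]$ with $g(t_{0})=c^{\ast}$, which is exactly \eqref{**}. The step I expect to be the main obstacle is the exponent bookkeeping of the third paragraph --- verifying that the constant obtained from $(H_{2})$, $(H_{3})$ and \eqref{42} really matches the prescribed right‑hand side of \eqref{**} once its exponents are re‑expressed through $s$, $n$, $p$; everything else (continuity and the intermediate value theorem, and the elementary asymptotics of $g$ at $0$ and $\infty$ furnished by $(H_{1})$ and $(H_{2})$) is soft. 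A secondary, essentially cosmetic, issue is to produce a genuinely positive admissible pair at the outset --- which only serves the positivity clause in the statement (evidently meaning positivity in $\Omega$, since members of $W$ vanish on $\mathbb{R}^{n}\setminus\Omega$) and is irrelevant to the identity itself; any fixed positive function lying in $W^{s,p}_{V_{1}}(Q)\cap W^{s,p}_{V_{2}}(Q)$, taken for both $u_{0}$ and $v_{0}$, does the job.
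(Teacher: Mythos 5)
Your proof is correct, and it reaches the paper's constant via a slightly different mechanism. The paper first replaces $M_i$ and $\widehat{M}_i$ by their lower bounds from $(H_2)$ and $(H_3)$ to obtain an explicit two-term function $\omega_{u,v}(t)=\frac{m}{\theta p}t^{p}A(u,v)-\frac{t^{p_{s}^{\ast}}}{p_{s}^{\ast}}B(u,v)$ with $\zeta_{u,v}\ge\omega_{u,v}$ pointwise, computes $\max_{t>0}\omega_{u,v}$ in closed form at $t_{\ast}=\bigl(mA/(\theta B)\bigr)^{1/(p_{s}^{\ast}-p)}$, and only then invokes \eqref{42} and the identities $\frac{s}{n}=\frac{1}{p}-\frac{1}{p_{s}^{\ast}}$, $\frac{n}{sp}=\frac{p_{s}^{\ast}}{p_{s}^{\ast}-p}$; it then drops from $\frac{s}{n}$ to $\frac{s}{n}-\frac{\theta-1}{\theta p}$ and from $(mS)$ to $(mS/\theta)$ exactly as you do, using $\theta\ge 1$. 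You instead keep $\zeta$ (your $g$) intact, locate its interior maximizer $\sigma$, and use the stationarity identity $A_{1}M_{1}(\sigma^{p}A_{1})+A_{2}M_{2}(\sigma^{p}A_{2})=\sigma^{p_{s}^{\ast}-p}B$ to eliminate $\widehat{M}_{i}$ (via $(H_3)$) and to lower-bound $\sigma$ (via $(H_2)$) before applying \eqref{42}. Both routes require the same hypotheses and produce the same bound, with the cancellation of the $A(u_{0},v_{0})$-powers being the point in each; yours is "optimize then estimate," the paper's is "estimate then optimize." Two small remarks: you invoke $(H_1)$ for $g(t)\to-\infty$, whereas the paper attributes this to $(H_3)$ (indeed $(H_3)$ alone gives $\widehat{M}_{i}(\tau)\le \widehat{M}_{i}(1)\tau^{\theta_{i}}$ for $\tau\ge 1$ with $\theta_{i}<p_{s}^{\ast}/p$, which already forces $g\to-\infty$; using $(H_1)$ is harmless since it is a standing hypothesis). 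Also, since $a_{1},a_{2}>0$ on $\Omega$, one has $B(u,v)>0$ for every $(u,v)\in W\setminus\{0\}$, so the identity you prove in fact holds for an arbitrary nonzero pair, not only a positive one — exactly as in the paper.
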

\begin{proof}
For any $(u,v)\in W\setminus\{0\}$, we define the function $\zeta_{u,v}: (0,\infty) \to \mathbb{R}$, as
follows
\begin{eqnarray*}
% \nonumber to remove numbering (before each equation)
 \zeta_{u,v} \left( t\right) &=& \frac{1}{p}(\widehat{M}_{1}(A_{1}({tu}))+\widehat{M_{2}}(A_{2}({tv})))-\frac{1}{p^{\ast}_{s}}B({t(u,v)}) \\
   &=& \frac{1}{p}(\widehat{M}_{1}(t^{p}A_{1}({u}))+\widehat{M}_{2}(t^{p}A_{2}({v})))
-\frac{t^{p^{\ast}_{s}}}{p^{\ast}_{s}}B({u,v}).
\end{eqnarray*}
By $(H_{3}),$ it can be shown that $\underset{t\rightarrow0^{+}}{\lim}\zeta_{u,v}(t)\geq 0$ and $\underset{t\rightarrow\infty}{\lim}\zeta_{u,v}(t)=-\infty$.
It is clear that $\zeta$ is of class $C^1.$
Moreover, 
invoking
 $(H_{2})$ and $(H_{3})$, we obtain
\begin{eqnarray*}
% \nonumber to remove numbering (before each equation)
  \zeta_{u,v} \left( t\right)&\geq&\frac{t^{p}}{\theta_{1} p}A_{1}(u)M_{1}(t^{p}A_{1}(u))+\frac{t^{p}}{\theta_{2} p}A_{2}(v)M_{2}(t^{p}A_{2}(v))-\frac{t^{p^{\ast}_{s}}}{p^{\ast}_{s}}B({u,v}) \\
   &\geq& \frac{t^{p}}{\theta p}(A_{1}(u)M_{1}(t^{p}A_{1}(u))+A_{2}(v)M_{2}(t^{p}A_{2}(v)))-\frac{t^{p^{\ast}_{s}}}{p^{\ast}_{s}}B({u,v}) \\
   &\geq&\frac{t^{p}}{\theta p}(m_{1}A_{1}(u)+m_{2}A_{2}(v))-\frac{t^{p^{\ast}_{s}}}{p^{\ast}_{s}}B({u,v}) \\
   &\geq& \frac{m}{\theta p}t^{p}A(u,v)-\frac{t^{p^{\ast}_{s}}}{p^{\ast}_{s}}B({u,v})=\omega_{u,v}(t).
\end{eqnarray*}
Since $\displaystyle\lim_{t\to 0}\omega_{u,v}(t)=0$ and $\displaystyle\lim_{t\to \infty}\omega_{u,v}(t)=-\infty$, it follows that  $\omega_{u,v}$ attains its global maximum at
\begin{equation*}
t_{\ast }=\left( \frac{mA(u,v)}{\theta B(u,v)}\right) ^{\frac{1}{p_{s}^{\ast }-p}}.
\end{equation*}%
Moreover, from $\eqref{42}$ and the fact that $p^{\ast}_{s}>\theta p$, we have
\begin{eqnarray}
% \nonumber to remove numbering (before each equation)
  \underset{t>0}{\sup }\;\omega_{u,v} \left( t\right)&=&\omega_{u,v}(t_{\ast })\notag \\
  &=& \left(\frac{p^{\ast}_{s}-p}{pp^{\ast}_{s}}\right)\left(\frac{m}{\theta}\right)^
  {\frac{p^{\ast}_{s}}{p^{\ast}_{s}-p}}(A(u,v))^{\frac{p^{\ast}_{s}}{p^{\ast}_{s}-p}}(B(u,v))^{-\frac{p}{p^{\ast}_{s}-p}}\notag \\
  &=& \left(\frac{p^{\ast}_{s}-p}{pp^{\ast}_{s}}\right)\left(\frac{m}{\theta}\right)^
  {\frac{p^{\ast}_{s}}{p^{\ast}_{s}-p}}\left((A(u,v))^{-\frac{p^{\ast}_{s}}{p}}B(u,v)\right)^{-\frac{p}{p^{\ast}_{s}-p}} \notag \\
  &=& \frac{s}{n}\left(\frac{m}{\theta}\right)^{\frac{n}{s p}}(A(u,v))^{\frac{n}{s p}}(B(u,v))^{-\frac{n}{sp^{\ast}_{s}}}\notag \\
  &\geq& \frac{s}{n}a ^{\frac{-n}{sp_{s}^{\ast }}}(\frac{m S}{\theta})^{\frac{n}{s p}}\notag \\
  &\geq&\left(\frac{s}{n}-\frac{\theta-1}{\theta p}\right)a ^{\frac{-n}{sp_{s}^{\ast }}}(\frac{m S}{\theta})^{\frac{n}{s p}}\label{ll} \\
  &=&\left(\frac{p_{s}^{\ast }-\theta p}{\theta pp_{s}^{\ast }}\right)a ^{\frac{-n}{sp_{s}^{\ast }}}(\frac{m S}{\theta})^{\frac{n}{s p}}>0.\notag
\end{eqnarray}
Therefore, using the variations of the functions $\zeta_{u,v}$ and $\omega_{u,v}$, we get
\begin{equation*}
    \underset{t>0}{\sup }\;\zeta_{u,v}\geq\underset{t>0}{\sup }\; \omega_{u,v}\geq \left(\frac{s}{n}-\frac{\theta-1}{\theta p}\right)a ^{\frac{-n}{sp_{s}^{\ast }}}(\frac{mS}{\theta})^{\frac{n}{s p}}.
\end{equation*}
Hence, there exists ${t}_{0}>0,$ such that
\begin{equation*}
\zeta_{u,v} \left( {t}_{0}\right) =\left(\frac{s}{n}-\frac{\theta-1}{\theta p}\right)a ^{\frac{-n}{sp_{s}^{\ast }}}(\frac{mS}{\theta})^{\frac{n}{s p}}.
\end{equation*}
This completes the proof of Proposition 3.1.\end{proof}
Set now
\begin{equation}
L=\left(p-q\right) \left( \frac{m}{q}\left(\frac{s}{n}-\frac{\theta-1}{\theta p}\right)\right)
^{-\frac{q}{p-q}}\left( \frac{p_{s}^{\ast }-q}{\theta p^{2}}\right) ^{\frac{p}{p-q}}\left( \gamma S^{-\frac{q}{p}}\left\vert \Omega \right\vert
^{\frac{p_{s}^{\ast }-q}{p_{s}^{\ast }}}\right) ^{\frac{p}{p-q}}.
\label{m}
\end{equation}
\begin{proposition}
\label{prop01}  Assume that  conditions $(H_{2})$ and $(H_{3})$ hold. If $1<q<p<p_{s}^{\ast },$ then  every Palais-Smale sequence $\left\{ (u_{k},v_{k})\right\} $ $\subset W$ for $J_{\lambda
}$ at level $c$, with
\begin{equation}\label{L}
c<\left(\frac{s}{n}-\frac{\theta-1}{\theta p}\right) a ^{\frac{-n}{sp_{s}^{\ast}}}\left(\frac{mS}{\theta }\right)^{\frac{n}{sp}}-\lambda ^{\frac{p}{p-q}}L,
\end{equation}
possesses a convergent subsequence.
\end{proposition}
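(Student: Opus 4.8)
The goal is the standard Brezis--Nirenberg-type compactness result: a Palais--Smale sequence at a level strictly below the threshold $\bigl(\tfrac{s}{n}-\tfrac{\theta-1}{\theta p}\bigr)a^{-n/(sp_s^\ast)}(mS/\theta)^{n/(sp)}-\lambda^{p/(p-q)}L$ must be relatively compact. The architecture is: (i) show the PS sequence is bounded in $W$; (ii) pass to a weakly convergent subsequence $(u_k,v_k)\rightharpoonup(u,v)$, using the compact embedding $W^{s,p}_{V_i}(Q)\hookrightarrow L^\nu(\Omega)$ for $\nu\in[1,p_s^\ast)$ so that $u_k\to u$, $v_k\to v$ strongly in $L^q(\Omega)$ and a.e.; (iii) identify the weak limit as a critical point of $J_\lambda$; (iv) rule out the "concentration" loss of mass by a level estimate, forcing strong convergence.

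\smallskip

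\textbf{Step 1: Boundedness.} Since $\{(u_k,v_k)\}$ is a PS sequence, $J_\lambda(u_k,v_k)\to c$ and $J_\lambda'(u_k,v_k)\to 0$ in $W'$; in particular $\langle J_\lambda'(u_k,v_k),(u_k,v_k)\rangle=o(\|(u_k,v_k)\|)$. Form the combination $J_\lambda(u_k,v_k)-\tfrac{1}{p_s^\ast}\langle J_\lambda'(u_k,v_k),(u_k,v_k)\rangle$; the critical term $B(u_k,v_k)$ cancels. Using $(H_3)$ (i.e. $M_i(t)t\le\theta_i\widehat M_i(t)$) one bounds the Kirchhoff contribution from below by a positive multiple of $A_1(u_k)M_1(A_1(u_k))+A_2(v_k)M_2(A_2(v_k))$, then by $(H_2)$ by $m(\tfrac{1}{\theta p}-\tfrac{1}{p_s^\ast})A(u_k,v_k)$, exactly as in Lemma~\ref{lem05}. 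The remaining term, $\lambda(1-q/p_s^\ast)\bigl(1-\tfrac{q}{p_s^\ast}\bigr)^{-1}$-weighted $C(u_k,v_k)$, is controlled by \eqref{43} as $\gamma S^{-q/p}|\Omega|^{(p_s^\ast-q)/p_s^\ast}(A(u_k,v_k))^{q/p}$, which is sublinear in $A(u_k,v_k)$ since $q<p$. Hence $c+o(1)+o(\|(u_k,v_k)\|)\ge (\text{const})A(u_k,v_k)-(\text{const})(A(u_k,v_k))^{q/p}$, and since $q/p<1$ this yields $\sup_k\|(u_k,v_k)\|<\infty$.

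\smallskip

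\textbf{Step 2: Weak limit is a solution.} By reflexivity of $W$, up to a subsequence $(u_k,v_k)\rightharpoonup(u,v)$ in $W$, $u_k\to u$ and $v_k\to v$ strongly in $L^q(\Omega)$ (compact embedding) and a.e. in $\Omega$, and $u_k\rightharpoonup u$, $v_k\rightharpoonup v$ in $L^{p_s^\ast}(\Omega)$. Since $f,g$ are continuous and $q$-homogeneous with the growth bound in \eqref{H}, one shows $C(u_k,v_k)\to C(u,v)$ and the terms $\int_\Omega H_u(x,u_k,v_k)z\,dx\to\int_\Omega H_u(x,u,v)z\,dx$ by dominated convergence. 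The sequences $\|u_k\|_{V_1}^p$, $\|v_k\|_{V_2}^p$ are bounded, so along a further subsequence $A_i\to$ some limits; continuity of $M_i,\widehat M_i$ plus the standard $S^{s,p}$ weak-continuity of the fractional gradient Gateaux pairing lets one pass to the limit in Definition~\ref{def1.1} and conclude $J_\lambda'(u,v)=0$, i.e. $(u,v)$ is a weak solution of \eqref{E}. (One must be slightly careful: the Kirchhoff coefficient $M_i(A_i(u_k))$ converges to $M_i(\lim A_i(u_k))$, which need not equal $M_i(A_i(u))$ unless the norms converge; this is handled in Step~3 together with the concentration analysis, or by the usual trick of testing with $(u_k-u,v_k-v)$.)

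\smallskip

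\textbf{Step 3: No concentration — strong convergence.} Write $\widetilde u_k=u_k-u$, $\widetilde v_k=v_k-v$. By the Brezis--Lieb lemma, $B(u_k,v_k)=B(u,v)+B(\widetilde u_k,\widetilde v_k)+o(1)$ and, for the fractional seminorms and the $V_i$-weighted $L^p$ norms, $A_i(u_k)=A_i(u)+A_i(\widetilde u_k)+o(1)$. Testing $\langle J_\lambda'(u_k,v_k),(u_k,v_k)\rangle\to 0$ and subtracting $\langle J_\lambda'(u,v),(u,v)\rangle=0$, and using that $C$ and its derivatives pass to the limit (Step 2), one gets, after using $(H_2)$ to bound the Kirchhoff terms below by $m\,A(\widetilde u_k,\widetilde v_k)+o(1)$,
\begin{equation*}
m\,A(\widetilde u_k,\widetilde v_k)+o(1)\le B(\widetilde u_k,\widetilde v_k)+o(1).
\end{equation*}
Suppose $A(\widetilde u_k,\widetilde v_k)\to\ell>0$. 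Combining with the Sobolev inequality \eqref{42}, $B(\widetilde u_k,\widetilde v_k)\le aS^{-p_s^\ast/p}(A(\widetilde u_k,\widetilde v_k))^{p_s^\ast/p}$, forces $m\ell\le aS^{-p_s^\ast/p}\ell^{p_s^\ast/p}+o(1)$, hence a lower bound $\ell\ge(mS/a)^{\dots}$ of the form $\ell\ge \bigl(\tfrac{mS}{a}\bigr)^{n/(sp)}a^{\dots}$ — more precisely $B(\widetilde u_k,\widetilde v_k)\ge mS^{p_s^\ast/p}/a^{p/(p_s^\ast-p)}\cdot(\text{the right power})$. Then estimate the energy level: $c=\lim J_\lambda(u_k,v_k)=J_\lambda(u,v)+\lim\bigl[\tfrac1p(\widehat M_1(A_1(\widetilde u_k))+\widehat M_2(A_2(\widetilde v_k)))-\tfrac{1}{p_s^\ast}B(\widetilde u_k,\widetilde v_k)\bigr]$. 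Using $(H_3)$ the bracket is $\ge(\tfrac{1}{\theta p}-\tfrac{1}{p_s^\ast})(\text{Kirchhoff})-\tfrac{1}{p_s^\ast}B(\widetilde u_k,\widetilde v_k)+\dots\ge$ the quantity $\bigl(\tfrac{s}{n}-\tfrac{\theta-1}{\theta p}\bigr)a^{-n/(sp_s^\ast)}(mS/\theta)^{n/(sp)}$ by the same optimization done in Proposition~\ref{T0} (the function $\omega$ there). On the other hand $J_\lambda(u,v)\ge -\lambda^{p/(p-q)}L$: indeed $(u,v)$ is a critical point, so $B(u,v)=$ Kirchhoff $-\lambda qC(u,v)$, plug into $J_\lambda(u,v)$, use $(H_2),(H_3)$ and \eqref{43}, minimize the resulting expression $m(\tfrac{1}{\theta p}-\tfrac{1}{p_s^\ast})A(u,v)-\lambda(1-\tfrac{q}{p_s^\ast})\gamma S^{-q/p}|\Omega|^{(p_s^\ast-q)/p_s^\ast}(A(u,v))^{q/p}$ over $A(u,v)\ge0$; the minimum is exactly $-\lambda^{p/(p-q)}L$ with $L$ as in \eqref{m}. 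Adding, $c\ge\bigl(\tfrac{s}{n}-\tfrac{\theta-1}{\theta p}\bigr)a^{-n/(sp_s^\ast)}(mS/\theta)^{n/(sp)}-\lambda^{p/(p-q)}L$, contradicting \eqref{L}. Hence $\ell=0$, so $A(\widetilde u_k,\widetilde v_k)\to 0$, i.e. $(u_k,v_k)\to(u,v)$ strongly in $W$.

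\smallskip

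\textbf{Main obstacle.} The delicate point is Step 3: getting a \emph{clean} lower bound on the ``energy at infinity'' (the Brezis--Lieb remainder) in the presence of the Kirchhoff nonlinearities $M_i$, since $\widehat M_i$ is only controlled through $(H_2)$--$(H_3)$ and the coefficients $M_i(A_i(u_k))$ at the concentrating part need not converge to $M_i(A_i(u))$. The trick is to keep $A_i(u_k)=A_i(u)+A_i(\widetilde u_k)+o(1)$ and bound $\widehat M_i$ from below via $(H_2)$ ($\widehat M_i(t)\ge m_it$) and from the Kirchhoff term in $\langle J_\lambda',\cdot\rangle$ via $(H_3)$, exactly mirroring the optimization already performed in Proposition~\ref{T0}; that is why the threshold in \eqref{L} has precisely that shape. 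The lower bound $J_\lambda(u,v)\ge-\lambda^{p/(p-q)}L$ is a routine one-variable minimization once \eqref{43} and $(H_2)$--$(H_3)$ are in hand.
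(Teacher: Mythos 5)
Your overall architecture (boundedness, weak limit, ruling out concentration via a level estimate) matches the paper's, and Step~1 is identical to Lemma~\ref{lem05}. However, Step~3 as written contains two genuine gaps, both stemming from trying to isolate $J_\lambda(u,v)$ as a separate term.

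First, the decomposition
\[
c=\lim_k J_\lambda(u_k,v_k)=J_\lambda(u,v)+\lim_k\Bigl[\tfrac1p\bigl(\widehat M_1(A_1(\widetilde u_k))+\widehat M_2(A_2(\widetilde v_k))\bigr)-\tfrac1{p_s^\ast}B(\widetilde u_k,\widetilde v_k)\Bigr]
\]
is not valid. The Brezis--Lieb lemma gives $A_i(u_k)=A_i(u)+A_i(\widetilde u_k)+o(1)$, but $\widehat M_i$ is a general nonlinear primitive (for Kirchhoff, typically $\widehat M_i(t)=a_it+\tfrac{b_i}{2}t^2$), so $\widehat M_i(A_i(u_k))\neq \widehat M_i(A_i(u))+\widehat M_i(A_i(\widetilde u_k))+o(1)$; indeed $\widehat M_i(A_i(u_k))\to\widehat M_i(\mu^p)$, and $\mu^p\ge A_1(u)$ with equality only when you already have strong convergence. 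Second, the lower bound $J_\lambda(u,v)\ge-\lambda^{p/(p-q)}L$ is deduced from the assertion that $(u,v)$ is a critical point of $J_\lambda$, but the weak limit of a PS sequence for a Kirchhoff functional is in general a solution only of the \emph{modified} equation with frozen coefficients $M_i(\mu^p)$, $M_i(\eta^p)$ (since $M_i(A_i(u_k))\to M_i(\mu^p)\neq M_i(A_i(u))$ in general). You flagged this in Step~2 but then used $J_\lambda'(u,v)=0$ anyway in Step~3; that circularity is not resolved.

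The paper sidesteps both issues by \emph{first} forming $J_\lambda(u_k,v_k)-\tfrac1{p_s^\ast}\langle J_\lambda'(u_k,v_k),(u_k,v_k)\rangle$, which kills the critical term $B$ outright, and then applying $(H_3)$ to replace $\widehat M_i(A_i(u_k))$ by $\tfrac1{\theta_i}A_i(u_k)M_i(A_i(u_k))$. Because $M_i(A_i(u_k))\to M_i(\mu^p)$ is a \emph{constant}, the product $A_i(u_k)M_i(A_i(u_k))$ does split via Brezis--Lieb as $M_i(\mu^p)\bigl[A_i(\widetilde u_k)+A_i(u_\ast)\bigr]+o(1)$. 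Combined with the identity $M_1(\mu^p)\lim A_1(\widetilde u_k)+M_2(\eta^p)\lim A_2(\widetilde v_k)=b$, obtained by testing $J_\lambda'$ with $(u_k-u_\ast,v_k-v_\ast)$ rather than with $(u_k,v_k)$, and the Sobolev bound $b\ge a^{-n/(sp_s^\ast)}(mS)^{n/(sp)}$, the paper then lower-bounds $c$ by $(\tfrac{s}{n}-\tfrac{\theta-1}{\theta p})b+h(A(u_\ast,v_\ast))$ and minimizes the elementary function $h$; no critical-point identity for $(u_\ast,v_\ast)$ is ever used. Your ``Main obstacle'' paragraph gestures at exactly this strategy (``from the Kirchhoff term in $\langle J_\lambda',\cdot\rangle$ via $(H_3)$''), but the argument actually presented in Step~3 does not execute it and remains incorrect as stated.
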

\begin{proof}
Let $\left\{ (u_{k},v_{k})\right\} $  be a Palais-Smale sequence for  $J_{\lambda
}$ at level $c$, that is $$J_{\lambda}(u_{k},v_{k})\to c, \;\mbox{and}\;J'_{\lambda}(u_{k},v_{k})\to 0, \;\mbox{as}\;k\to \infty.$$ 
By Lemma (\ref{lem05}), we know that $\left\{ (u_{k},v_{k})\right\} $ is bounded in
$W$. So up to a subsequence, still denoted by $\left\{ (u_{k},v_{k})\right\}$,
there exists $(u_{\ast },v_{\ast})\in W$, $\mu>0$ and $\eta>0$, such that as $k$ tends to infinity, we have
\begin{equation}\label{rz} 
\left\{
  \begin{array}{ll}
(u_{k},v_{k}) \rightharpoonup (u_{\ast },v_{\ast})\quad \text{weakly in } W,\\
\Vert u_{k}\Vert_{V_{1}}\rightarrow \mu,  \Vert v_{k}\Vert_{V_{2}}\rightarrow \eta, \\
  (u_{k},v_{k}) \rightharpoonup (u_{\ast },v_{\ast}) \quad \text{weakly in } L^{p_{s}^{\ast }}\left(
\Omega\right)\times L^{p_{s}^{\ast }}\left(
\Omega\right),\\
  (u_{k},v_{k}) \rightarrow (u_{\ast },v_{\ast}) \ \text{strongly in\ }L^{q}(
\Omega)\times L^{q}(
\Omega),\ 1\leq q<p_{s}^{\ast },\\
  (u_{k},v_{k}) \rightarrow (u_{\ast },v_{\ast})\ \text{a.e. in \ }\Omega,
  \end{array}
\right.
\end{equation}
Since $1\leq q<p_{s}^{\ast },$
it follows by \cite[Theorem IV-9]{40}, that there
exist functions  $l_{1},l_{2}\in L^{q}(\Omega)$ such that for a.e. $x\in \Omega$, we have 
$
\left\vert u_{k}(x)\right\vert \leq l_{1}(x),\;\; \left\vert v_{k}(x)\right\vert \leq l_{2}(x).
$
Hence, by the Dominated convergence theorem, 
\begin{equation}\label{hz}
C(u_{k},v_{k})\longrightarrow C(u_{\ast },v_{\ast}),\text{ as}\ k\rightarrow \infty .
\end{equation}
On the other hand, by the Brezis-Lieb lemma \cite [Lemma 1.32]{23}, for $k$ large enough, we have
\begin{equation*}
A_{1}(u_{k})=A_{1}(u_{k}-u_{\ast })+A_{1}(u_{\ast })+o(1),
\end{equation*}
\begin{equation*}
A_{2}(v_{k})=A_{2}(v_{k}-v_{\ast })+A_{2}(v_{\ast })+o(1),
\end{equation*}
and
\begin{equation*}
B(u_{k},v_{k})=B\left( u_{k}-u_{\ast },v_{k}-v_{\ast}\right) +B(u_{\ast },v_{\ast})+o(1).
\end{equation*}
Consequently, by letting $k$ tend to infinity, we get
\begin{eqnarray*}
o(1) &=&\langle J_{\lambda }^{\prime }(u_{k},v_{k}),(u_{k}-u_{\ast
},v_{k}-v_{\ast })\rangle _{W} \\
&=&M_{1}(A_{1}(u_{k}))\left( \int\limits_{Q}\frac{\left\vert
u_{k}(x)-u_{k}(y)\right\vert ^{p-1}((u_{k}-u_{\ast })(x)-(u_{k}-u_{\ast
})(y))}{|x-y|^{n+ps}}dxdy\right.  \\
&&\left. +\int\limits_{\Omega }V_{1}(x)\left\vert u_{k}\right\vert
^{p-1}(u_{k}-u_{\ast })dx\right) -\int\limits_{\Omega
}a_{1}(x)|u_{k}|^{p_{s}^{\ast }-1}(u_{k}-u_{\ast })dx \\
&&+M_{2}(A_{2}(v_{k}))\left( \int\limits_{Q}\frac{\left\vert
v_{k}(x)-v_{k}(y)\right\vert ^{p-1}((v_{k}-v_{\ast })(x)-(v_{k}-v_{\ast
})(y))}{|x-y|^{n+ps}}dxdy\right.  \\
&&\left. +\int\limits_{\Omega }V_{2}(x)\left\vert v_{k}\right\vert
^{p-1}(v_{k}-v_{\ast })dx\right) -\int\limits_{\Omega
}a_{2}(x)|v_{k}|^{p_{s}^{\ast }-1}(v_{k}-v_{\ast })dx \\
&&-\lambda \int\limits_{\Omega }(H_{u}(x,u_{k},v_{k})(u_{k}-u_{\ast
})+H_{v}(x,u_{k},v_{k})(v_{k}-v_{\ast }))dx \\
&=&M_{1}(\mu ^{p})(\mu ^{p}-A_{1}(u_{\ast }))+M_{2}(\eta ^{p})(\eta
^{p}-A_{2}(v_{\ast })) \\
&&-\int\limits_{\Omega }(a_{1}(x)|u_{k}|^{p_{s}^{\ast
}}+a_{2}(x)|v_{k}|^{p_{s}^{\ast }})dx+\int\limits_{\Omega }(a_{1}(x)|u_{\ast
}|^{p_{s}^{\ast }}+a_{2}(x)|v_{\ast }|^{p_{s}^{\ast }})dx \\
&&-\lambda \int\limits_{\Omega }(H_{u}(x,u_{k},v_{k})(u_{k}-u_{\ast
})+H_{v}(x,u_{k},v_{k})(v_{k}-v_{\ast }))dx+o(1) \\
&=&M_{1}\left( \mu ^{p}\right) A_{1}\left( u_{k}-u_{\ast }\right)
+M_{2}\left( \eta ^{p}\right) A_{2}\left( v_{k}-v_{\ast }\right)
-B(u_{k}-u_{\ast },v_{k}-v_{\ast }) \\
&&-\lambda \int\limits_{\Omega }(H_{u}(x,u_{k},v_{k})(u_{k}-u_{\ast
})+H_{v}(x,u_{k},v_{k})(v_{k}-v_{\ast }))dx+o(1).
\end{eqnarray*}%
Therefore
\begin{equation*}
\begin{array}{l}
M_{1}\left( \mu^{p}\right)\underset{k\rightarrow\infty}{\lim}A_{1}(u_{k}-u_{\ast })+M_{2}\left( \eta^{p}\right)\underset{k\rightarrow\infty}{\lim}A_{2}(v_{k}-v_{\ast })= \underset{k\rightarrow\infty}{\lim}B\left( u_{k}-u_{\ast },v_{k}-v_{\ast }\right)\\
+\underset{k\rightarrow\infty}{\lim}\lambda\int\limits_{\Omega}(H_{u}(x,u_{k},v_{k})(u_{k}-u_{\ast})+H_{v}(x,u_{k},v_{k})(v_{k}-v_{\ast}))dx
\end{array}
\end{equation*}
By \eqref{H}, \eqref{rz} and the Holder inequality, it follows that
\begin{eqnarray*}
  \int\limits_{\Omega}(H_{u}(x,u_{k},v_{k})(u_{k}-u_{\ast})&+&H_{v}(x,u_{k},v_{k})(v_{k}-v_{\ast}))dx\\
   &\leq& \gamma q\int\limits_{\Omega}\vert u_{k}\vert^{q-1}(u_{k}-u_{\ast})dx+\gamma q\int\limits_{\Omega}\vert v_{k}\vert^{q-1}(v_{k}-v_{\ast})dx \\
  &\leq& \gamma q\Vert u_{k}\Vert_{q}^{q-1}\Vert u_{k}-u_{\ast}\Vert_{q}+\gamma q\Vert v_{k}\Vert_{q}^{q-1}\Vert v_{k}-v_{\ast}\Vert_{q}\\
  &\leq&C_{q}\gamma q\Vert u_{k}\Vert_{V_{1}}^{q-1}\Vert u_{k}-u_{\ast}\Vert_{q}+C_{q}\gamma q\Vert v_{k}\Vert_{V_{2}}^{q-1}\Vert v_{k}-v_{\ast}\Vert_{q},
\end{eqnarray*}
for some positive constant $C_{q}$.
So, we obtain
\begin{equation}\label{Hldr}
    \underset{k\rightarrow\infty}{\lim}\int\limits_{\Omega}(H_{u}(x,u_{k},v_{k})(u_{k}-u_{\ast})+H_{v}(x,u_{k},v_{k})(v_{k}-v_{\ast}))dx=0.
\end{equation}
Thus, from \eqref{Hldr}, we can deduce that
\begin{equation*}
\underset{k\rightarrow\infty}{\lim}B\left( u_{k}-u_{\ast },v_{k}-v_{\ast }\right)= M_{1}\left( \mu^{p}\right)\underset{k\rightarrow\infty}{\lim}A_{1}(u_{k}-u_{\ast })+M_{2}\left( \eta^{p}\right)\underset{k\rightarrow\infty}{\lim}A_{2}(v_{k}-v_{\ast }).
\end{equation*}

For simplicity, set
 $
b:=\displaystyle\lim_{k\rightarrow \infty
}B\left( u_{k}-u_{\ast }, v_{k}-v_{\ast }\right).
$
Note that  $b\geq 0.$ Moreover, to  prove that $(u_{k},v_{k})$ converges strongly to  $(u_{\ast},v_{\ast})$, it suffices to prove that $b=0$.
 Suppose to the contrary, that $b>0$.
Then  by $(H_{2})$, we get
\begin{eqnarray}
  A_{1}(u_{k}-u_{\ast })M_{1}(\mu^{p})+A_{2}(v_{k}-v_{\ast })M_{2}(\eta^{p}) &\geq& m_{1}A_{1}(u_{k}-u_{\ast })+m_{2}A_{2}(v_{k}-v_{\ast })\notag \\
  &\geq& m A(u_{k}-u_{\ast },v_{k}-v_{\ast }) .\label{sl}
\end{eqnarray}
Using \eqref{42}, we get
\begin{equation}\label{Bk}
    A(u_{k}-u_{\ast},v_{k}-v_{\ast })\geq Sa^{-\frac{p}{p_{s}^{\ast }}}(B\left( u_{k}-u_{\ast },v_{k}-v_{\ast }\right))^{\frac{p}{p_{s}^{\ast }}}.
\end{equation}
So by combining  \eqref{sl} with \eqref{Bk}, we obtain
\begin{equation*}
    A_{1}(u_{k}-u_{\ast })M_{1}(A_{1}(u_{\ast}))+A_{2}(v_{k}-v_{\ast })M_{2}(A_{2}(v_{\ast}))\geq mSa^{-\frac{p}{p_{s}^{\ast }}}(B\left( u_{k}-u_{\ast },v_{k}-v_{\ast }\right))^{\frac{p}{p_{s}^{\ast }}}.
\end{equation*}
By letting $k$ tend to infinity, we conclude that
\begin{equation}
b\geq a ^{\frac{-n}{sp_{s}^{\ast }}
}(mS)^{^{\frac{n}{sp}}}.  \label{b}
\end{equation}
On the other hand, by
 $(H_{3})$, \eqref{hz} and $\eqref{b}$, one has
\begin{eqnarray*}
c&=&\underset{k\longrightarrow \infty }{\lim }J_{\lambda  }(u_{k},v_{k})
 = \underset{k\longrightarrow \infty }{\lim }\left( J_{\lambda }(u_{k},v_{k})- \frac{1}{p_{s}^{\ast }}\langle J_{\lambda }^{\prime }(u_{k},v_{k}),(u_{k},v_{k})\rangle_{W}\right)  \\
&=&\underset{k\longrightarrow \infty }{\lim } \left[\frac{1}{p}\left(\widehat{M}_{1}\left( A_{1}(u_{k})\right)+\widehat{M}_{2}\left( A_{2}(v_{k})\right)\right)-\frac{1}{p_{s}^{\ast }}A_{1}(u_{k})M_{1}(A_{1}(u_{k}))\right.\\
&&\left.-\frac{1}{p_{s}^{\ast }} A_{2}(v_{k})M_{2}\left(A_{2}(v_{k})\right)-\lambda \left(\frac{p_{s}^{\ast }-q}{p_{s}^{\ast }}\right) C(u_{k},v_{k})\right]  \\
&\geq&\underset{k\longrightarrow \infty }{\lim }\left[\frac{1}{\theta_{1} p}A_{1}(u_{k})M_{1}(A_{1}(u_{k}))+\frac{1}{\theta_{2} p} A_{2}(v_{k})M_{2}\left(A_{2}(v_{k})\right)\right.\\
&&\left.-\frac{1}{p_{s}^{\ast }}\left(A_{1}(u_{k})M_{1}(A_{1}(u_{k}))+ A_{2}(v_{k})M_{2}\left(A_{2}(v_{k})\right)\right)-\lambda \left(\frac{p_{s}^{\ast }-q}{p_{s}^{\ast }}\right)C(u_{k},v_{k}) \right]\\
&\geq&\underset{k\longrightarrow \infty }{\lim }\left[\left(\frac{1}{\theta p}-\frac{1}{p_{s}^{\ast }}\right)\left(A_{1}(u_{k})M_{1}(A_{1}(u_{k}))+ A_{2}(v_{k})M_{2}\left(A_{2}(v_{k})\right)\right)\right.\\
&&\left. -\lambda \left(\frac{p_{s}^{\ast }-q}{p_{s}^{\ast }}\right)C(u_{k},v_{k}) \right]\\
&=& \underset{k\longrightarrow \infty }{\lim }\left[ \left(\frac{p_{s}^{\ast }-\theta p}{\theta pp_{s}^{\ast }}\right)A_{1}(u_{k})M_{1}(\mu^{p})+\left(\frac{p_{s}^{\ast }-\theta p}{\theta pp_{s}^{\ast }}\right)A_{2}(v_{k})M_{2}(\eta^{p})\right.\\
&&\left.-\lambda \left(\frac{p_{s}^{\ast }-q}{p_{s}^{\ast }}\right)C(u_{k},v_{k})\right] \\
&=& \underset{k\longrightarrow \infty }{\lim }\left[ \left(\frac{p_{s}^{\ast }-\theta p}{\theta pp_{s}^{\ast }}\right)\left(A_{1}(u_{k}-u_{\ast})M_{1}(\mu^{p})+A_{2}(v_{k}-v_{\ast})M_{2}(\eta^{p})\right)\right.\\
&&\left.+\left(\frac{p_{s}^{\ast }-\theta p}{\theta pp_{s}^{\ast }}\right)\left(A_{1}(u_{\ast})M_{1}(\mu^{p})+A_{2}(v_{\ast})M_{2}(\eta^{p})\right)-\lambda \left(\frac{p_{s}^{\ast }-q}{p_{s}^{\ast }}\right)C(u_{k},v_{k})\right]\\
&=& \left(\frac{s}{n}-\frac{\theta-1}{\theta p}\right)b+\left(\frac{s}{n}-\frac{\theta-1}{\theta p}\right)\left(A_{1}(u_{\ast})M_{1}(\mu^{p})+A_{2}(v_{\ast})M_{2}(\eta^{p})\right)\\
&&-\lambda \left(\frac{p_{s}^{\ast }-q}{p_{s}^{\ast }}\right)C(u_{\ast},v_{\ast})\\
&\geq& \left(\frac{s}{n}-\frac{\theta-1}{\theta p}\right)b+\left(\frac{s}{n}-\frac{\theta-1}{\theta p}\right)\left(m_{1}A_{1}(u_{\ast})+m_{2}A_{2}(v_{\ast})\right)
-\lambda \left(\frac{p_{s}^{\ast }-q}{p_{s}^{\ast }}\right)C(u_{\ast},v_{\ast})\\
&\geq&\left(\frac{s}{n}-\frac{\theta-1}{\theta p}\right)a ^{\frac{-n}{sp_{s}^{\ast }}
}(mS)^{^{\frac{n}{sp}}}+\left(\frac{s}{n}-\frac{\theta-1}{\theta p}\right)mA(u_{\ast},v_{\ast})
-\lambda \left(\frac{p_{s}^{\ast }-q}{p_{s}^{\ast }}\right)C(u_{\ast},v_{\ast}).
\end{eqnarray*}
Now,  from  \eqref{43}, and using the fact that $\theta p<p_{s}^{\ast }$, we obtain
\begin{eqnarray}
c&\geq&\left(\frac{s}{n}-\frac{\theta-1}{\theta p}\right)a ^{\frac{-n}{sp_{s}^{\ast }}
}(mS)^{^{\frac{n}{sp}}}+\left(\frac{s}{n}-\frac{\theta-1}{\theta p}\right)mA(u_{\ast },v_{\ast })  \notag \\
&&-\lambda \gamma S^{-\frac{q}{p}}\left\vert \Omega \right\vert ^{
\frac{p_{s}^{\ast }-q}{p_{s}^{\ast }}}\left( \frac{p_{s}^{\ast }-q}{\theta p}
\right)\left( A(u_{\ast },v_{\ast })\right) ^{\frac{q}{p}}  \notag \\
&=&\left(\frac{s}{n}-\frac{\theta-1}{\theta p}\right)a ^{\frac{-n}{sp_{s}^{\ast }}
}(mS)^{^{\frac{n}{sp}}}+h\left( A(u_{\ast },v_{\ast })\right) ,
\label{000}
\end{eqnarray}
where $h$ is defined on $[0,\infty)$ by
\begin{equation*}
h(\xi )=\left(\frac{s}{n}-\frac{\theta-1}{\theta p}\right)m\xi-\lambda \gamma S^{-\frac{q}{p}}\left\vert \Omega \right\vert ^{
\frac{p_{s}^{\ast }-q}{p_{s}^{\ast }}}\left( \frac{p_{s}^{\ast }-q}{\theta p}
\right)\xi ^{\frac{q}{p}}.
\end{equation*}%
A simple computation shows that $h$ attains its minimum at
\begin{equation*}
\xi _{0}=\left( \lambda q \gamma S^{-\frac{q}{p}}\left\vert \Omega \right\vert ^{
\frac{p_{s}^{\ast }-q}{p_{s}^{\ast }}}\left( \frac{p^{\ast}-q}{mp}\right)\frac{1}{\frac{s}{n}\theta p-(\theta-1)}\right) ^{\frac{p}{p-q}},
\end{equation*}
and%
\begin{equation}
\underset{\xi >0}{\inf }\;h\left( \xi \right) =h(\xi _{0})=-\lambda ^{\frac{p}{p-q}}L,
\label{001}
\end{equation}%
where $L$ is given by (\ref{m}).

Therefore, from \eqref{000}, \eqref{001}, and by considering $\theta\geq1$, we obtain
\begin{eqnarray*}
% \nonumber to remove numbering (before each equation)
  c &\geq& \left(\frac{s}{n}-\frac{\theta-1}{\theta p}\right)a ^{\frac{-n}{sp_{s}^{\ast }}
}(mS)^{\frac{n}{sp}}-\lambda ^{\frac{p}{p-q}}L \\
  &\geq& \left(\frac{s}{n}-\frac{\theta-1}{\theta p}\right)a ^{\frac{-n}{sp_{s}^{\ast }}
}(\frac{mS}{\theta})^{\frac{n}{sp}}-\lambda ^{\frac{p}{p-q}}L.
\end{eqnarray*}
This contradicts \eqref{L}. Hence, $b=0$. So, we deduce that $(u_{k},v_{k})\rightarrow (u_{\ast },v_{\ast})$
strongly in $W$. This completes the proof.
\end{proof}
\begin{proposition}
\label{prop02}  Assume that  conditions $(H_{2})$ and $(H_{3})$ hold.
Then there exist $\lambda^{\ast }>0,$ ${t}_{0}>0$ and $
({u}_{0},v_{0})\in W$, such that
\begin{equation}
J_{\lambda  }({t}_{0}{u}_{0},{t}_{0}{v}_{0})\leq \left(\frac{s}{n}-\frac{\theta-1}{\theta p}\right)a ^{\frac{-n}{sp_{s}^{\ast }}
}(\frac{mS}{\theta})^{\frac{n}{sp}}-\lambda ^{\frac{p}{p-q}}L, \label{48}
\end{equation}%
provided that  $\lambda \in (0,\lambda^{\ast })$. In particular,
\begin{equation}
\alpha _{\lambda }^{-}<\left(\frac{s}{n}-\frac{\theta-1}{\theta p}\right)a ^{\frac{-n}{sp_{s}^{\ast }}
}(\frac{mS}{\theta})^{\frac{n}{sp}}-\lambda ^{\frac{p}{p-q}}L.
\end{equation}
\end{proposition}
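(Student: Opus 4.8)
The plan is to build the pair $(t_0u_0,t_0v_0)$ directly out of Proposition~\ref{T0} and then exploit the $q$-homogeneity of $H$ to split $J_\lambda$ into the ``critical part'', which Proposition~\ref{T0} pins down exactly, and a subcritical part which is a plain multiple of $\lambda$. First I would fix $(u_0,v_0)\in W\setminus\{0\}$ with $u_0,v_0>0$ and $t_0>0$ as in Proposition~\ref{T0}, choosing the pair moreover so that
\[
C(u_0,v_0)=\int_\Omega H(x,u_0,v_0)\,dx>0 ;
\]
this is possible because $qH(x,u,v)=uf(x,u,v)+vg(x,u,v)\geq0$ for $u,v\geq0$ and, unless $f\equiv g\equiv0$ on $\Omega\times(0,\infty)^2$ (a degenerate case reducing the system to a purely critical one), $H$ is positive on a set of positive measure. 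Since $A_i(t_0u_0)=t_0^{\,p}A_i(u_0)$, $B(t_0u_0,t_0v_0)=t_0^{p_{s}^{\ast}}B(u_0,v_0)$ and $C(t_0u_0,t_0v_0)=t_0^{\,q}C(u_0,v_0)$ by \eqref{homogen}--\eqref{H}, the definition of $J_\lambda$ together with \eqref{**} gives at once
\[
J_\lambda(t_0u_0,t_0v_0)=\Big(\frac sn-\frac{\theta-1}{\theta p}\Big)a^{-\frac{n}{sp_{s}^{\ast}}}\Big(\frac{mS}{\theta}\Big)^{\frac{n}{sp}}-\lambda\,t_0^{\,q}\,C(u_0,v_0).
\]

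Next I would fix the threshold. Because $\tfrac{p}{p-q}>1$, the inequality $\lambda^{\frac{p}{p-q}}L\leq\lambda\,t_0^{\,q}C(u_0,v_0)$ is equivalent to $\lambda^{\frac{q}{p-q}}\leq t_0^{\,q}C(u_0,v_0)/L$, i.e. to $\lambda\leq\big(t_0^{\,q}C(u_0,v_0)/L\big)^{\frac{p-q}{q}}$. Hence, setting
\[
\lambda^\ast:=\min\Big\{\lambda_\ast,\ \big(t_0^{\,q}C(u_0,v_0)/L\big)^{\frac{p-q}{q}}\Big\},
\]
one gets, for every $\lambda\in(0,\lambda^\ast)$, precisely the estimate \eqref{48} (in fact with strict inequality), while keeping $\lambda_\ast$ as one of the two factors guarantees that $\lambda$ stays in the range where Lemmas~\ref{lem03}--\ref{lem05} apply.

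For the conclusion on $\alpha_\lambda^-$ I would apply Lemma~\ref{lem03} to $(u_0,v_0)$ — its proof only uses $(u_0,v_0)\in W\setminus\{0\}$ with $C(u_0,v_0)>0$, not membership in $\mathcal N_\lambda$ — to obtain $t_2>0$ with $(t_2u_0,t_2v_0)\in\mathcal N_\lambda^-$, so that $\alpha_\lambda^-\leq J_\lambda(t_2u_0,t_2v_0)$. From the sign behaviour of $\psi_{u_0,v_0}$ established in Lemma~\ref{lem03} and from \eqref{phiprime}, the fibering map $\varphi_{u_0,v_0}$ decreases on $(0,t_1)$, increases on $(t_1,t_2)$ and decreases on $(t_2,\infty)$ with $\varphi_{u_0,v_0}(0^+)=0$, so $J_\lambda(t_2u_0,t_2v_0)=\varphi_{u_0,v_0}(t_2)=\sup_{t>0}J_\lambda(tu_0,tv_0)$. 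The hard part is exactly to ensure that this supremum still lies below $\big(\tfrac sn-\tfrac{\theta-1}{\theta p}\big)a^{-n/(sp_{s}^{\ast})}(mS/\theta)^{n/(sp)}-\lambda^{\frac{p}{p-q}}L$: the supremum of the purely critical part $\tfrac1p\big(\widehat M_1(t^pA_1(u_0))+\widehat M_2(t^pA_2(v_0))\big)-\tfrac{t^{p_{s}^{\ast}}}{p_{s}^{\ast}}B(u_0,v_0)$ is bounded below by $\tfrac sn a^{-n/(sp_{s}^{\ast})}(mS/\theta)^{n/(sp)}$ for \emph{every} direction, so an arbitrary fixed $(u_0,v_0)$ will not do; one must take $(u_0,v_0)$ of concentration (``bubble'') type, localized near a maximum point of the weights, with a concentration parameter tuned to $\lambda$, and then carry out a Brezis--Nirenberg type expansion — here $(H_1)$ enters to bound the Kirchhoff part of $\zeta_{u_0,v_0}$ from above by a multiple of $t^{p_{s}^{\ast}}$, so that the negative subcritical term $-\lambda t^qC(u_0,v_0)$ dominates both the truncation error and the correction $\lambda^{\frac{p}{p-q}}L$. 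Once this localized choice is in place one concludes $\alpha_\lambda^-\leq\varphi_{u_0,v_0}(t_2)<\big(\tfrac sn-\tfrac{\theta-1}{\theta p}\big)a^{-n/(sp_{s}^{\ast})}(mS/\theta)^{n/(sp)}-\lambda^{\frac{p}{p-q}}L$.
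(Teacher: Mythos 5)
Your derivation of \eqref{48} is correct and follows the paper's route exactly: fix $(t_0,(u_0,v_0))$ from Proposition~\ref{T0}, expand $J_\lambda(t_0u_0,t_0v_0)$ using the homogeneity of $A_i$, $B$, $C$, and require $\lambda<\lambda_{\ast\ast\ast}=\big(t_0^qC(u_0,v_0)/L\big)^{(p-q)/q}$ so that $-\lambda t_0^qC(u_0,v_0)<-\lambda^{p/(p-q)}L$; these are precisely the paper's displays \eqref{411} and \eqref{dd}, with $\lambda^*=\min(\lambda_*,\lambda_{\ast\ast},\lambda_{\ast\ast\ast})$. Your added remark that one should ensure $C(u_0,v_0)>0$ is correct and worth making explicit, since otherwise $\lambda_{\ast\ast\ast}=0$ and Lemma~\ref{lem03} gives nothing.

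For the second assertion, however, your write-up is incomplete. You are right that $(t_0u_0,t_0v_0)$ need not lie on $\mathcal{N}_\lambda^-$, and that the Nehari representative $(t_2u_0,t_2v_0)$ from Lemma~\ref{lem03} satisfies $J_\lambda(t_2u_0,t_2v_0)=\varphi_{u_0,v_0}(t_2)=\sup_{t>t_1}\varphi_{u_0,v_0}(t)$, which is $\geq\varphi_{u_0,v_0}(t_0)$ rather than $\leq$ it whenever $t_0>t_1$; so \eqref{48} alone does not yield $\alpha_\lambda^-<\big(\frac{s}{n}-\frac{\theta-1}{\theta p}\big)a^{-n/(sp_s^*)}(mS/\theta)^{n/(sp)}-\lambda^{p/(p-q)}L$. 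The paper dispatches this step with the single phrase ``using the analysis of the fibering maps'', and you are right to be uneasy about it: as you observe, $\sup_{t>0}\zeta_{u_0,v_0}(t)\geq\frac{s}{n}a^{-n/(sp_s^*)}(mS/\theta)^{n/(sp)}$ for every direction $(u_0,v_0)$, so one cannot hope to control $\varphi_{u_0,v_0}(t_2)$ by generic considerations alone. But your proposed remedy, a Brezis--Nirenberg-type concentration expansion for a bubble-shaped $(u_0,v_0)$, is only sketched, not carried out. As it stands, the required inequality on $\alpha_\lambda^-$ remains unproved in your argument; to finish you must actually exhibit a direction $(u_0,v_0)$ and a $t_2$ with $(t_2u_0,t_2v_0)\in\mathcal{N}_\lambda^-$ and bound $\varphi_{u_0,v_0}(t_2)$ (not merely $\varphi_{u_0,v_0}(t_0)$) by the stated threshold.
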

\begin{proof}
We put
\begin{equation*}
\lambda_{\ast\ast}=\left(\frac{1}{L}\left(\frac{s}{n}-\frac{\theta-1}{\theta p}\right)a ^{\frac{-n}{sp_{s}^{\ast }}
}(\frac{mS}{\theta})^{\frac{n}{sp}}\right)^{\frac{p-q}{p}}.
\end{equation*}
Then  for any $0<\lambda<\lambda_{\ast\ast}$, we have
\begin{equation}
\left(\frac{s}{n}-\frac{\theta-1}{\theta p}\right)a ^{\frac{-n}{sp_{s}^{\ast }}
}(\frac{mS}{\theta})^{\frac{n}{sp}}-\lambda ^{\frac{p}{p-q}}L>0.  \label{111}
\end{equation}
By \eqref{**}, there exist ${t}_{0}>0$ and $({u}_{0},v_{0})\in W\setminus
\{0\}$, such that
\begin{eqnarray}
J_{\lambda }({t}_{0}{u}_{0},t_{0}v_{0}) &=&\frac{1}{p}\left(\widehat{M}_{1}(t_{0}^{p}A_{1}({u}_{0}))+\widehat{M}_{2}(t_{0}^{p}A_{2}({v}_{0}))\right)
-\frac{{t}_{0}^{p^{\ast}}}{p^{\ast}}B({u}_{0},v_{0})-\lambda{t}_{0}^{q}C(
{u}_{0},v_{0})    \notag \\
&=&\left(\frac{s}{n}-\frac{\theta-1}{\theta p}\right)a ^{\frac{-n}{sp_{s}^{\ast }}
}(\frac{mS}{\theta})^{\frac{n}{s p}}-\lambda{t}_{0}^{q}C({u}_{0},v_{0})  \label{411}
\end{eqnarray}
Let
\begin{equation*}
\lambda_{\ast\ast\ast}=\left( \frac{{t}_{0}^{q}C({u}_{0},v_{0})}{L},
\right) ^{\frac{p-q}{q}}.
\end{equation*}%
Then  for all $\lambda \in (0,\lambda_{\ast\ast\ast})$, we have
\begin{equation}
-\lambda{t}_{0}^{q}C({u}_{0},v_{0})<-\lambda ^{\frac{p}{p-q}}L.\label{dd}
\end{equation}%
Thus, from $\eqref{411}$ and $\eqref{dd}$, we get
\begin{equation*}
J_{\lambda }({t}_{0}{u}_{0},t_{0}v_{0})<\left(\frac{s}{n}-\frac{\theta-1}{\theta p}\right)a ^{\frac{-n}{sp_{s}^{\ast }}
}(\frac{mS}{\theta})^{\frac{n}{s p}}-\lambda ^{\frac{p}{p-q}}L.
\end{equation*}%
Hence, \eqref{48} holds. 
Finally, if we put
$
\lambda ^{\ast }=\min (\lambda _{\ast},\lambda _{\ast\ast},\lambda _{\ast\ast\ast}),
$
then  for all $0<\lambda <\lambda ^{\ast }$ and using the analysis of the fibering maps $\varphi _{u,v}(t)=J_{\lambda  }(tu,tv)$, we get
\begin{equation*}
\alpha _{\lambda }^{-}<\left(\frac{s}{n}-\frac{\theta-1}{\theta p}\right)a ^{\frac{-n}{sp_{s}^{\ast }}
}(\frac{mS}{\theta})^{\frac{n}{s p}}-\lambda ^{\frac{p}{p-q}}L.
\end{equation*}%
This completes the proof of Proposition~\ref{prop01}.
\end{proof}
Now, we are in a position to prove the main result of this paper.\\

\textit{ Proof of Theorem \ref{theo02}:} By Lemma \ref{lem05}, $J_{\lambda }$ is bounded from  below on $\mathcal{N}_{\lambda }$. Consequently, it is bounded from  below on  $\mathcal{N}_{\lambda }^{+}$ and  $\mathcal{N}_{\lambda }^{-}$. So, we can find    sequences $\{(u_{k}^{+},v_{k}^{+})\}\subset \mathcal{N}_{\lambda }^{+}$ and $\{(u_{k}^{-},v_{k}^{-})\}\subset \mathcal{N}_{\lambda }^{-}$, such that if  $k$ tends to infinity,  then
\begin{equation*}
J_{\lambda }(u_{k}^{+},v_{k}^{+}) \longrightarrow \inf_{(u,v)\in \mathcal{N}_{\lambda }^{+}}J_{\lambda }(u,v)=\alpha _{\lambda}^{+},
\end{equation*}
and
\begin{equation*}
J_{\lambda  }(u_{k}^{-},v_{k}^{-}) \longrightarrow \inf_{(u,v)\in \mathcal{N}_{\lambda }^{-}}J_{\lambda }(u,v)=\alpha _{\lambda
 }^{-}.
\end{equation*}
By an analysis of fibering maps $\varphi _{u,v}$ we can conclude that $\alpha _{\lambda }^{+}<0$ and $\alpha _{\lambda }^{-}>0$. Moreover, by Propositions \ref{prop01} and \ref{prop02}, we have
\begin{equation*}
J_{\lambda }(u_{k}^{+},v_{k}^{+}) \longrightarrow J_{\lambda }(u_{\ast}^{+},v_{\ast}^{+})=\inf_{(u,v)\in \mathcal{N}_{\lambda }^{+}}J_{\lambda }(u,v)=\alpha _{\lambda
}^{+},J_{\lambda }^{\prime }(u_{k}^{+},\;v_{k}^{+})\longrightarrow 0,
\end{equation*}
and
\begin{equation*}
J_{\lambda }(u_{k}^{-},v_{k}^{-}) \longrightarrow J_{\lambda }(u_{\ast}^{-},v_{\ast}^{-})=\inf_{(u,v)\in \mathcal{N}_{\lambda }^{-}}J_{\lambda }(u,v)=\alpha _{\lambda
}^{-}, \;J_{\lambda }^{\prime }(u_{k}^{-},v_{k}^{-})\longrightarrow 0.
\end{equation*}
Therefore, $(u_{\ast}^{+},v_{\ast}^{+})$ (respectively, $(u_{\ast}^{-},v_{\ast}^{-})$) is a minimizer of $J_{\lambda }$ on $\mathcal{N}_{\lambda }^{+}$ (respectively, on $\mathcal{N}_{\lambda }^{-})$.
Hence, by Lemma \ref{lem01}, problem (\ref{E}) has two solutions $(u_{\ast}^{+},v_{\ast}^{+})\in
\mathcal{N}_{\lambda  }^{+}\ $ and $(u_{\ast}^{-},v_{\ast}^{-})\in \mathcal{N}
_{\lambda }^{-}$.  Moreover, since $\mathcal{N}_{\lambda }^{+}\cap
\mathcal{N}_{\lambda }^{-}= \emptyset ,$ 
it follows that these two solutions are
distinct. 
Finally, the fact that $\alpha _{\lambda }^{+}<0$ and $\alpha _{\lambda }^{-}>0$ imply that  $(u_{\ast}^{+},v_{\ast}^{+})$  and $(u_{\ast}^{-},v_{\ast}^{-})$ are  nontrivial solutions for  problem \eqref{E}. 
This completes the proof of Theorem~\ref{theo02}.
\qed
\\

\subsection*{Acknowledgements.} 
The fourth author was supported by the Slovenian Research Agency program P1-0292 and
grants N1-0278, N1-0114, N1-0083, J1-4031, and J1-4001.

\vfill

$$$$
$^a$LAMIS Laboratory, Echahid Cheikh Larbi Tebessi University, 12002 Tebessa, Algeria. \\
Emails souraya.fareh@univ-tebessa.dz, kamel.akrout@univ-tebessa.dz
$$$$
$^b$LR10ES09 Mod\'elisation mat\'ematique, analyse harmonique et
t\'eorie du potentiel, Facult\'e des Sciences, Universit\'e de Tunis El Manar, 2092 Tunis, Tunisie.\\
Email: abdeljabbar.ghanmi@lamsin.rnu.tn
$$$$
$^c$Department of Mathematics and Computer Science, Faculty of Education, University of Ljubljana, 1000 Ljubljana, Slovenia. \\
Email: dusan.repovs@pef.uni-lj.si
$$$$
$^d$Department of Mathematics, Faculty of Mathematics and Physics, University of Ljubljana, 1000 Ljubljana, Slovenia. \\
Email: dusan.repovs@fmf.uni-lj.si
$$$$
$^e$Department of Mathematics, Institute of Mathematics, Physics and Mechanics, 1000 Ljubljana, Slovenia  \\
Email: dusan.repovs@guest.arnes.si
$$$$
\end{document}